\newtheorem*{Mheo}{Main Theorem}
\newtheorem{theo}{Theorem}[section]
\newtheorem{prop}{Proposition}[section]
\newtheorem{lemma}{Lemma}[section]
\newtheorem{rem}{Remark}[section]
\begin{document}


\title{Embedding asymptotically expansive systems}

\author{DAVID BURGUET}

\address{LPMA - CNRS UMR 7599 \\ 
\\
Universite Paris 6 \\
\\
 75252 Paris Cedex 05 FRANCE\\
\\
david.burguet@upmc.fr}

\keywords{Entropy, expansiveness, generators} 
 \subjclass[2010]{37A35, 37B10, 28D20}

\begin{abstract}A topological dynamical system is said asymptotically expansive when entropy and periodic points grow subexponentially at arbitrarily small scales.   We prove a Krieger like embedding theorem for asymptotically expansive systems with the small boundary property. We show that such a system $(X,T)$ embeds in the  $K$-full shift if $ h_{top}(T)<\log K$ and $\sharp Per_n(X,T)\leq K^n$ for any integer $n$. The embedding is  in general not continuous  (unless the system is expansive and $X$ is zero-dimensional) but the induced map on the set of invariant measures is a topological embedding. It is  shown that this property implies asymptotical expansiveness. We prove also that the inverse of the embedding map may be continuously extended to a faithful principal symbolic extension.  
\end{abstract}

\maketitle
\section{Introduction}
Symbolic dynamics play since the pioneer work of Hadamard \cite{Had} a crucial role in the theory of dynamical systems. 
Here we investigate the problem of embedding a dynamical system in a shift with a finite alphabet.

For measure preserving ergodic systems the celebrated Krieger generator theorem gives a complete answer in terms of measure theoretical entropy : 

\begin{theo}\label{kkk} (Theorem 2.1 and 4.3 in \cite{Kr})
Let $T$ be an ergodic automorphism of a standard probability space $(X,\mathcal{U}, \mu)$ then $T$ embeds measure theoretically in the  shift with $K$ letters, i.e. there exists a measurable injective   map $\psi:X\rightarrow \{1,...,K\}^\mathbb{Z}$ with $\sigma \circ \psi=\psi\circ T$ if and only if:  \\

either $ h_{\mu}(T)<\log K$,

or $h_\mu(T)=\log K$ and $T$ is Bernoulli. 
\end{theo}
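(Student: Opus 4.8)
The plan is to split the equivalence into its easy necessity direction and its substantial sufficiency direction, and within sufficiency to separate the strict regime $h_\mu(T)<\log K$ from the boundary regime $h_\mu(T)=\log K$. For necessity, suppose $\psi$ is such an isomorphism and let $\nu=\psi_*\mu$ be the induced $\sigma$-invariant measure on $\{1,\dots,K\}^{\mathbb Z}$. Since $\psi$ is a measure isomorphism intertwining $T$ and $\sigma$, we have $h_\mu(T)=h_\nu(\sigma)$. Every $\sigma$-invariant measure on the full $K$-shift has entropy at most $\log K$, with equality exactly for the uniform Bernoulli measure $(1/K,\dots,1/K)^{\otimes\mathbb Z}$, which is the unique measure of maximal entropy. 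Hence $h_\mu(T)\le\log K$, and if $h_\mu(T)=\log K$ then $\nu$ is this Bernoulli measure, so $(X,T,\mu)$ is Bernoulli. This settles necessity.

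For sufficiency in the strict case $h_\mu(T)<\log K$, I would reduce the embedding problem to the partition form of Krieger's theorem: it suffices to produce a measurable partition $\alpha=\{\alpha_1,\dots,\alpha_K\}$ of $X$ into $K$ atoms that is \emph{generating}, meaning $\bigvee_{n\in\mathbb Z}T^{-n}\alpha=\mathcal U$ modulo $\mu$-null sets. Indeed, the itinerary map $\psi(x)=(a_n)_{n\in\mathbb Z}$ with $a_n=i$ iff $T^nx\in\alpha_i$ is then measurable, $\sigma$-equivariant and essentially injective into $\{1,\dots,K\}^{\mathbb Z}$, hence the desired isomorphism onto its image. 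Since $(X,\mathcal U,\mu)$ is standard, fix a refining sequence of finite partitions $\beta_1\le\beta_2\le\cdots$ that jointly generate $\mathcal U$ under $T$. The strategy is to build $\alpha$ as a limit, in the Rokhlin partition metric $d(\cdot,\cdot)$ under which the space of finite partitions is complete, of partitions $\alpha^{(k)}$ with at most $K$ atoms, arranged so that each $\beta_k$ becomes measurable with respect to $\bigvee_n T^{-n}\alpha^{(k)}$ up to a summable error; the limit partition then generates every $\beta_k$, hence all of $\mathcal U$.

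The heart of the matter is the coding step performed at each stage, whose two quantitative inputs are Rokhlin's tower lemma — for every $n$ and $\varepsilon>0$ there is a base $F$ with $F,TF,\dots,T^{n-1}F$ disjoint and $\mu(\bigcup_{j<n}T^jF)>1-\varepsilon$ — and the Shannon--McMillan--Breiman theorem, which makes the number of $\mu$-typical $\alpha^{(k)}$-names of length $n$ grow like $e^{nh_\mu(T,\alpha^{(k)})}\le e^{nh_\mu(T)}$. Because $h_\mu(T)<\log K$, for large $n$ this count is exponentially smaller than $K^n$, leaving exponential slack. I would exploit this by reserving one symbol pattern as a \emph{marker} pinning down the base of a height-$n$ tower, and by injectively re-encoding along each tower column the old name together with the extra information needed to resolve $\beta_k$ into marker-free words of $\{1,\dots,K\}^n$; the surplus $K^n/e^{nh_\mu(T)}\to\infty$ is precisely what makes such an injection, and hence unique decodability of the new $\alpha^{(k+1)}$-name, possible. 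The main obstacle is exactly this simultaneous bookkeeping: keeping the atom count equal to $K$, securing unique decodability via the marker, and controlling $d(\alpha^{(k+1)},\alpha^{(k)})$ by the tower error $\varepsilon$ so that the inductive limit exists and stays generating. Letting $n\to\infty$ and $\varepsilon\to0$ with summable errors closes the induction.

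Finally, the boundary case $h_\mu(T)=\log K$ with $T$ Bernoulli follows from Ornstein's isomorphism theorem: the uniform Bernoulli $K$-shift has entropy exactly $\log K$, and any two Bernoulli systems of equal entropy are isomorphic, so $(X,T,\mu)$ is in fact isomorphic, not merely embeddable, to the full $K$-shift equipped with its maximal measure. Combined with the necessity argument, this yields the stated dichotomy.
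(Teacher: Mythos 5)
You should know at the outset that the paper contains no proof of this statement: Theorem \ref{kkk} is quoted from Krieger \cite{Kr} purely as background, and the only related argument the paper supplies is its Appendix, which treats the equality case of the \emph{topological} embedding theorem (Theorem \ref{kkkk}), not the measure-theoretic one. So your proposal can only be measured against the classical proof, not against anything in this paper.

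Against that standard, your outline is the classical route and is structurally sound. The necessity direction is complete as written: entropy is an isomorphism invariant, every shift-invariant measure on the full $K$-shift has entropy at most $\log K$, and the unique measure of maximal entropy is the uniform Bernoulli measure, which forces Bernoullicity when $h_\mu(T)=\log K$. The boundary case of sufficiency is also complete modulo Ornstein's isomorphism theorem, which is the accepted way to handle it. The strict case $h_\mu(T)<\log K$ is where the whole content of the theorem lives, and there your text is a plan rather than a proof: the reduction to a $K$-atom generating partition is correct, and the tools you name (Rokhlin towers, Shannon--McMillan--Breiman, a reserved marker block, convergence of partitions with summable errors) are exactly Krieger's, but the three points that constitute the actual work are left unexecuted. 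Concretely: (i) unique decodability requires that the marker block never occur accidentally inside code words, so the relevant surplus is not $K^n/e^{nh_\mu(T)}$ but the count of \emph{marker-free} words of length $n$, which grows at rate $\log\lambda<\log K$ with $\lambda\nearrow K$ as the marker lengthens --- one must choose the marker long enough that $h_\mu(T)<\log\lambda$; (ii) when $\alpha^{(k)}$ is perturbed to $\alpha^{(k+1)}$, the approximations of $\beta_1,\dots,\beta_k$ already achieved use finite coding windows $N_1,\dots,N_k$ fixed at earlier stages, so each $\epsilon_{k+1}$ must be chosen small relative to all of these, and this interleaving has to be set up before the induction starts; (iii) one must verify that the limit partition genuinely generates, not merely that each $\beta_j$ is approximated. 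None of these would fail --- they are precisely what Krieger's paper does --- but as submitted, the hardest third of the theorem is deferred rather than proved.
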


For topological dynamical systems (i.e. continuous maps on a compact metrizable space) this question was also solved by Krieger. The obstructions are now  of three kinds : topological ($X$ is  zero-dimensional), set theoretical (the number of $n$-periodic points of $(X,T)$ is less than  or equal to the number of $n$-periodic points of the shift) and dynamical ($(X,T)$ is expansive and its topological entropy is less than the entropy of the shift).  For any integer $n$ we let $Per_n(X,T)$ be the set of periodic points of $(X,T)$ with least period equal to $n$.

\begin{theo}\label{kkkk}(Theorem 3 in \cite{Kri})
Let $(X,T)$ be a topological dynamical system, then $(X,T)$ embeds topologically in the shift $\sigma$ with $K$ letters, i.e. there exists a continuous injective map $\psi:X\rightarrow \{1,...,K\}^\mathbb{Z}$ with $\sigma \circ \psi=\psi\circ T$ if and only if: \\

either we have \begin{itemize}
\item $X$ is zero-dimensional, 
\item $\frac{1}{n}\log\sharp Per_n(X,T)\leq \log K$ for any integer $n$,
\item $h_{top}(T)<\log K$,
\item $T$ is expansive,
\end{itemize}

or $(X,T)$ is topologically conjugated to $( \{1,...,K\}^\mathbb{Z},\sigma)$ (and thus $h_{top}(T)=\log K$).
\end{theo}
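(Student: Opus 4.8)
The plan is to treat the two implications separately and to reduce the nontrivial (sufficiency) direction to the Krieger embedding theorem for subshifts.

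For necessity, suppose $\psi:X\rightarrow \{1,\dots,K\}^\mathbb{Z}$ is a continuous injective equivariant map. Since $X$ is compact and $\psi$ is continuous and injective, $\psi$ is a homeomorphism onto its image $Y:=\psi(X)$, which is a closed $\sigma$-invariant subset of the full shift, i.e. a subshift, and $(X,T)$ is conjugate to $(Y,\sigma)$. I would then read off each necessary condition from $Y$: the full shift is zero-dimensional, so $Y$ and hence $X$ is zero-dimensional; every subshift is expansive, so $T$ is expansive; $\psi$ carries $Per_n(X,T)$ bijectively onto $Per_n(Y,\sigma)\subseteq Per_n(\{1,\dots,K\}^\mathbb{Z},\sigma)$, giving the periodic-point inequality; and $h_{top}(T)=h_{top}(\sigma|_Y)\leq \log K$. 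If the entropy is strictly smaller we are in the first alternative; if it equals $\log K$, a word-counting argument shows that a subshift of the full $K$-shift of maximal entropy must be the whole shift, placing us in the second alternative.

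For sufficiency, the second alternative (conjugacy to the full shift) is trivial, so assume the four conditions of the first alternative. The first step is to realize $(X,T)$ symbolically: since $T$ is expansive and $X$ is compact, metrizable and zero-dimensional, a clopen partition of diameter below the expansive constant is generating, and the associated itinerary map is a conjugacy of $(X,T)$ onto a subshift $X'$ over a finite alphabet. Then $h_{top}(X')=h_{top}(T)<\log K$, with the same periodic-point counts. It remains to embed the subshift $X'$ into the full $K$-shift, which is a mixing shift of finite type of entropy $\log K$; this is exactly the content of the Krieger embedding theorem, whose hypotheses (strict entropy inequality and periodic-point domination) are precisely our standing assumptions.

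The main obstacle --- and the heart of Krieger's theorem --- is this last embedding of a subshift into a mixing shift of finite type. I would prove it by the marker method: handle the countably many periodic orbits first, placing each orbit of least period $n$ injectively into an orbit of the target of the same least period, which is possible exactly because $\sharp Per_n(X',\sigma)\leq \sharp Per_n(\{1,\dots,K\}^\mathbb{Z},\sigma)$; then use a Krieger marker lemma to cut aperiodic orbits into blocks whose lengths lie in a controlled window, say $[n,2n]$, and encode these blocks together with a recognizable marker word into the target. The strict inequality $h_{top}(X')<\log K$ guarantees, for $n$ large, strictly more admissible marker-delimited words in the full $K$-shift than blocks to be encoded, so the block code can be chosen injective, while the marker makes the resulting sliding-block code continuous and invertible on its image. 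The delicate points are the simultaneous control of injectivity across the periodic/aperiodic decomposition and the exact matching of least periods, which is where the full strength of the two numerical hypotheses is consumed.
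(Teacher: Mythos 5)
Your proposal is correct and takes essentially the same route as the paper: the paper disposes of the hard (sufficiency) direction by reducing it to Krieger's subshift-embedding theorem, which it simply cites as \cite{Kri} --- exactly the reduction you carry out (clopen generating partition below the expansive constant, then embed the resulting subshift into the full $K$-shift by the marker method) --- and your word-counting argument for the equality case (a full-entropy closed invariant subset of the full $K$-shift must be the whole shift) is the same argument as the paper's Appendix proposition. The only part you leave at sketch level, the marker-method proof of the subshift embedding itself, is precisely the part the paper also leaves to the citation.
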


Krieger only deals with the case $h_{top}(T)<\log K$, however the case of equality may be proved as in Theorem \ref{kkk} (See the Appendix).\\

To embed topologically more general systems  (in particular of arbitrarily topological dimension) into shift spaces one has to consider the shift with alphabet in $[0,1]^k$, $k\in \mathbb{N}\cup\{\mathbb{N}\}$. The existence of an embedding in the shift over $[0,1]^k$ with finite $k$ is one important question in the theory of mean dimension, see e.g. \cite{Lin} and references therein (for $k=\mathbb{N}$ it always exists by considering an embedding of $X$ into the Hilbert cube $[0,1]^\mathbb{N}$). If we want still work with the shift with a finite alphabet we have to consider a larger class of embedding maps.  
Recently Hochman gives a  Borel version of Krieger theorem for Borel dynamical systems, where the embedding is  a Borel map.  In this setting, after forgetting periodic points,   the only constraint is the supremum of the entropy of  Borel ergodic invariant probability measures:

\begin{theo}(Theorem 1.5 in \cite{Hoch})
Let $T$ be a measurable automorphism of a standard Borel  space $(X, \mathcal{U})$ then $(X,T)$ embeds almost Borel  in the shift with $K$ letters, i.e.
there exists a Borel subset $E$ of $X$ of full measure with respect to any ergodic $T$-invariant aperiodic measure and a Borel injective map $\psi:E\rightarrow \{1,...,K\}^\mathbb{Z}$ with $\sigma \circ \psi=\psi\circ T$, if and only if: \\

either we have  $ h_{\mu}(T)<\log K$  for  any Borel ergodic $T$-invariant probability measure $\mu$,
  
 or $(X,T)$ admits a unique  measure $\mu$ of maximal entropy, $\mu$ is Bernoulli and $h_\mu(T)=\log K$.

\end{theo}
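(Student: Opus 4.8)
The statement is an ``if and only if'', and I would establish the two implications separately, the forward (necessity) direction being soft and the converse (construction of the embedding) carrying essentially all of the weight.

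For necessity, suppose an almost Borel embedding $\psi\colon E\to\{1,\dots,K\}^{\mathbb Z}$ is given. For any ergodic aperiodic $T$-invariant measure $\mu$ one has $\mu(E)=1$, so $\psi$ restricts to a measurable isomorphism (modulo the null set $X\setminus E$) between $(X,\mu,T)$ and $(\{1,\dots,K\}^{\mathbb Z},\psi_*\mu,\sigma)$. Since Kolmogorov--Sinai entropy is a measure-isomorphism invariant, I get $h_\mu(T)=h_{\psi_*\mu}(\sigma)\le\log K$, and periodic measures carry zero entropy, so $h_\mu(T)\le\log K$ for \emph{every} ergodic $\mu$. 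If some ergodic $\mu$ attains $h_\mu(T)=\log K$, then $\psi_*\mu$ is a measure of maximal entropy for the full $K$-shift, hence equals its unique such measure, the uniform Bernoulli measure $\nu$. Distinct ergodic measures are mutually singular, and because $\psi$ is injective and Borel on $E$ (using Lusin--Souslin to keep images Borel) their push-forwards remain mutually singular; thus at most one ergodic measure is sent to $\nu$, which gives uniqueness of the maximal measure, and that measure, being measurably isomorphic to $\nu$, is Bernoulli. This is exactly the stated dichotomy.

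For the converse I would first treat the \emph{strict} case $h_\mu(T)<\log K$ for all ergodic $\mu$; this is a Borel, simultaneous-over-all-measures version of the measure-theoretic Krieger theorem \ref{kkk}, and I would assemble it from three standard pieces of Borel dynamics. First, I discard the periodic points (permissible, since $E$ need only be full for aperiodic measures) and work on the aperiodic part, whose orbit equivalence relation is hyperfinite by Weiss' theorem, producing a Borel increasing exhaustion by finite subequivalence relations, i.e.\ a Borel system of Rokhlin towers. Second, a Borel marker lemma supplies, for each height $N$, a Borel complete section meeting every orbit with consecutive gaps in $[N,2N]$, so each orbit is Borel-tiled by blocks of controlled length. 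Third, along these tiles I code the orbit into words over $\{1,\dots,K\}$, reserving boundary symbols to mark tile endpoints and using the interior to record a refining sequence of Borel partitions that separates points; the bound $h_\mu<\log K$ is precisely what guarantees, on a $\mu$-full set, that the orbit data fits injectively into $K$-symbol blocks. Concatenation then yields a Borel $\sigma$-equivariant map $\psi$, injective on a Borel full set $E$.

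The main obstacle is that the entropies $h_\mu$ may approach $\log K$ with no uniform gap, so no single block length $N$ and no single partition can work simultaneously for all ergodic components. I would resolve this by a multiscale argument: using Borel measurability (and upper semicontinuity) of the entropy function on the ergodic measures, decompose the aperiodic part into a Borel exhaustion on which the available combinatorial room is controlled, and let the tower heights and coding depth grow as the local entropy rises toward $\log K$, refining the code on a nested sequence of Borel sets. The delicate point is to verify that the resulting limit map stays Borel and injective on one \emph{fixed} full set, rather than merely $\mu$-almost everywhere for each $\mu$ separately; this forces the hyperfinite tower structure to be threaded coherently through all scales. Finally, in the \emph{equality} case there is a unique maximal measure $\mu$, Bernoulli with $h_\mu(T)=\log K$: by Birkhoff genericity the set of $\mu$-generic points is a Borel invariant $\mu$-full set on which Ornstein theory identifies $(X,\mu,T)$ with the uniform Bernoulli shift (the maximal measure of the $K$-shift), giving a Borel embedding there, while the complement carries only ergodic measures of entropy $<\log K$ and is embedded by the strict case. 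Since low-entropy images avoid $\nu$, the two images are disjoint and the two embeddings glue into a single almost Borel embedding.
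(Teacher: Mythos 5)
The paper never proves this statement: it is quoted as background (Theorem 1.3) from Hochman \cite{Hoch} and Buzzi \cite{Buzz}, so there is no internal proof to compare yours against; your attempt has to be judged on its own. Your necessity direction is essentially correct and complete: restricting $\psi$ to each aperiodic ergodic $\mu$ gives a measure isomorphism onto $(\{1,\dots,K\}^{\mathbb Z},\psi_*\mu,\sigma)$, entropy invariance gives $h_\mu(T)\leq \log K$, periodic ergodic measures have zero entropy, and if equality holds then $\psi_*\mu$ must be the unique measure of maximal entropy $\nu$ of the full shift, so $\mu$ is Bernoulli; mutual singularity of pushforwards of distinct ergodic measures under an injective Borel map (Lusin--Souslin) gives uniqueness. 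These are all standard and sound.

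The sufficiency direction, however, contains a genuine gap: it stops exactly where the content of Hochman's theorem begins. In the strict case you correctly assemble the standard ingredients (discarding periodic points, hyperfiniteness of the aperiodic part, Borel marker sections, block coding with reserved marker symbols), and you correctly identify the fatal obstruction to running this naively, namely that $\sup_\mu h_\mu(T)$ may equal $\log K$ with no uniform gap, so no single tower height, partition, or symbol-budget works for all ergodic components simultaneously. But your proposed resolution --- ``a multiscale argument'' with tower heights and coding depth ``growing as the local entropy rises,'' threaded ``coherently through all scales'' --- is a restatement of the difficulty, not an argument: you do not specify how codes at successive scales are made compatible, how the marker overhead is kept below the vanishing entropy margin $\log K-h_\mu$, nor how injectivity is obtained on one \emph{fixed} Borel full set rather than $\mu$-a.e.\ for each $\mu$ separately (the very point you flag as delicate). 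A secondary, fixable slip occurs in the equality case: the image $B_1$ of the low-entropy part and the image $B_2$ of the Ornstein part need not be disjoint as sets. What your entropy argument actually yields is $\nu(B_1)=0$ (otherwise pulling $\nu$ back through the injective equivariant map would produce a measure of entropy $\log K$ on the low-entropy part), so one must additionally delete from the Bernoulli part the $\mu$-null, invariant preimage of $B_1\cap B_2$ before gluing; this is harmless for an almost Borel embedding but is a step your write-up asserts away.
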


In the present paper we give a version of Krieger theorem for asymptotically expansive topological dynamical system with the small boundary property. A dynamical system is asymptotically expansive when entropy and periodic points grow subexponentially at arbitrarily small scales.  The small boundary property  is satisfied by many aperiodic systems, i.e. dynamical systems without periodic points. In particular any aperiodic system in finite dimension satisfies this property. Precise definitions and further known facts related to these two properties  are given in Section 2. 

 Our embedding is not continuous but it is in some sense precised later a limit of essentially continuous functions. However the map induced by the  embedding on the set of invariant measures is  continuous. Conversely we prove that a topological dynamical system  embedding in such a way in a shift with a  finite alphabet is asymptotically expansive. \\

Finally we relate for asymptotically expansive systems the present Krieger embedding theorem with the theory of symbolic extensions. More precisely the inverse of the Krieger embedding defines a principal faithful symbolic extension.   In this way we build  a bridge between the theory of symbolic extensions developed by M. Boyle and T. Downarowicz \cite{BFF}\cite{BD}\cite{Dowb} and Krieger  embedding like problems, for asymptotically expansive systems. In  these previous works   the  symbolic extension is built as the intersection of decreasing subshifts whereas we build here the  extension as the closure of increasing subshifts. \\

In Section 2 we introduce the main notions. We then state our main theorem.  Section 4 is devoted to the proof of our embedding Theorem for zero-dimensional asymptotically expansive systems. In Section 5 we deal with the general case by reduction to the zero-dimensional one. Finally we will see that  systems embeddable in our sense are asymptotically expansive. In this way we get a new characterization of asymptotical expansiveness.

\section{Background}
We will always consider topological dynamical systems $(X,T)$, i.e. $X$ is a compact metrizable space and $T:X\rightarrow X$ is an invertible continuous map. We also always assume that $(X,T)$ has finite topological entropy. We denote by $\mathcal{M}(X,T)$ the set of $T$-invariant Borel probability measures and we endow this set with the weak $*$-topology. 

\subsection{Small boundary property and essential partitions}\label{SBP}
We recall here  some facts about the small boundary property. A Borel subset of $X$ is called a \textbf{null} (resp. \textbf{full}) set if it has null (resp. full) measure for any $T$-invariant ergodic Borel probability measure.  

A subset of $X$ is said to have a \textbf{small boundary} when its boundary is a null set. A partition of $X$ is called \textbf{essential} when any of its element  have a  small boundary. For a zero-dimensional topological dynamical system $(Y,S)$, a Borel map $\psi:X\rightarrow Y$ is said to be \textbf{essentially continuous} if there exists a basis of clopen sets $\mathcal{B}$ of $Y$ such that for any $B\in \mathcal{B}$ the set $\psi^{-1}(B)$ has a small boundary. Observe that it easily implies that the map induced by $\psi$ from  $\mathcal{M}(X,T)$ to $\mathcal{M}(Y,S)$  is continuous.  We also say finally that $(X,T)$ has the \textbf{small boundary property} if $X$ admits finite essential partitions  with arbitrarily small diameter.

 This property has been investigated  in \cite{Lin1}, \cite{Lin99}, \cite{Gut} and  used in the theory of symbolic extensions and entropy structures developed in \cite{BD},\cite{Dowb}. In particular any dynamical system of finite  topological entropy with an aperiodic minimal factor (Theorem 6.2 in \cite{Lin99}) or any finite dimensional aperiodic system (Theorem 3.3 of \cite{Lin1}) satisfies the small boundary property. In fact in  \cite{Lin1}  it is proven that  any finite dimensional system admits a basis of neighborhoods whose boundaries have  zero measure for any aperiodic invariant measures. When $Per(X,T)$ is a zero-dimensional subset of $X$ one may easily arrange the construction of \cite{Lin1}  to ensure that any element of  the basis has a small boundary. Thus any finite dimensional system with a zero-dimensional set of periodic points has the small boundary property.  This result may also follow  from Lemma 3.7 of \cite{Kuz}.
  We refer to \cite{Gut} for some others dynamical properties implying the small boundary property.

\subsection{Asymptotic $h$-expansiveness}

Given two finite open covers $\mathcal{U}$ and $\mathcal{V}$ of $X$, we define the topological conditional 
entropy $h(\mathcal{V}|\mathcal{U})$ of $\mathcal{V}$ given $\mathcal{U}$ as 

$$h(\mathcal{V}|\mathcal{U}):=\lim_n\frac{1}{n} \sup_{U^n\in \mathcal{U}^n}\log\min \sharp\{ \mathcal{F}_n\subset \mathcal{V}^n, \  U^n \subset \bigcup_{V^n\in \mathcal{F}_n}V^n\}.$$

A map is said to be \textbf{asymptotically $h$-expansive} when for any decreasing sequence of  open covers $(\mathcal{U}_k)_k$ whose diameter goes to zero we have

$$ \lim_k\sup_{\mathcal{V}}h(\mathcal{V}|\mathcal{U}_k)=0,$$

or equivalently  when we have

$$ \inf_{\mathcal{U}}\sup_{\mathcal{V}}h(\mathcal{V}|\mathcal{U})=0.$$

We refer to \cite{Dowb} for basic properties of the topological conditional entropy and different characterizations of asymptotically $h$-expansiveness.

These notions were introduced by Misiurewicz in the seventies. One important consequence of the asymptotically $h$-expansiveness property is  the upper semicontinuity of the measure theoretical entropy function and thus the existence of a measure of maximal entropy. A large class of dynamical systems satisfies this property, e.g. continuous piecewise monotone interval maps \cite{Mis}, endomorphisms on compact groups, $C^\infty$ maps on compact manifolds \cite{Buzz},...  \\

A topological extension  $\pi:(Y,S)\rightarrow (X,T)$ is called \textbf{principal} when it preserves the entropy of invariant measures, i.e. $h_\nu(S)=h_\mu(T)$ for any $T$-invariant measure $\mu$ and for any $S$-invariant measure $\nu$ projecting on $\mu$. Ledrappier \cite{Led} proved that if $\pi$ is a principal extension then $T$ is asymptotically $h$-expansive if and only if the same holds for $S$.

\subsection{Asymptotic $per$-expansiveness}
Similarly we introduce now the new notion of asymptotical $per$-expansiveness. With the previous notations  we let 

$$per(T|\mathcal{U}):=\limsup_n\frac{1}{n} \sup_{U^n\in \mathcal{U}^n}\log\sharp  Per_n(X,T)\cap U^n$$

 and we say $(X,T)$ is  \textbf{asymptotically $per$-expansive} when we have
$$\lim_k per(T|\mathcal{U}_k)=0,$$

or equivalently  when we have
$$ \inf_{\mathcal{U}}per(T|\mathcal{U})=0.$$

We recall that a topological system $(X,T)$ is expansive when there exists a finite open cover $\mathcal{U}$ 
with the following property:  for any $x\neq y\in X$ there is  $k\in \mathbb{Z}$ such that $T^kx$ and $T^ky$ do not lie in the same element of $\mathcal{U}$. 

Obviously aperiodic systems  and expansive systems are asymptotically $per$-expansive. We say that a  topological dynamical system is  \textbf{asymptotically expansive} when it is both asymptotically $h$- and $per$-expansive. Among asymptotically expansive systems which are neither expansive nor apeiodic we can cite \cite{Bur16} toral quasi-hyperbolic automorphisms, piecewise expanding map of the interval or also generic piecewise affine surface homeomorphisms. 
The author   recently proved  \cite{Bur16} that any $C^\infty$ surface diffeomorphism, whose  invariant measures have one negative and one positive Lyapunov exponents uniformly away from zero, is asymptotically per-expansive (observe that any periodic point is then hyperbolic, in particular the set of periodic points is zero-dimensional and the system has the small boundary property).  In particular  $C^\infty$ Henon-like diffeomorphisms satisfy this property \cite{Ber} (see also \cite{Tak}).\\

  A topological extension is said to be \textbf{faithful} if the induced map between the sets of invariant probability measures is an homeomorphism. As any  system has a principal faithful aperiodic extension (even zero-dimensional, see \cite{DHH}), the factor of a  asymptotically $per$-expansive map is not necessarily asymptotically $per$-expansive, even when the entropy of measures is preserved and the extension is faithful. Thus asymptotically $per$-expansiveness is not preserved under principal faithful extensions. 
  
  A topological extension  $\pi:(Y,S)\rightarrow (X,T)$ is said to be \textbf{strongly faithful} if it is faithful and if it preserves periodic points, i.e. for any integer $n$ we have $\pi(Per_n(Y,S))=Per_n(X,T)$.  In Subsection \ref{dd}  we will show that a dynamical system with a principal  strongly faithful asymptotically $per$-expansive extension is also  asymptotically $per$-expansive.\\

\subsection{Symbolic extensions}

A \textbf{symbolic extension} of $(X,T)$ is a topological extension by a subshift over a finite alphabet. 
 The question of the existence of (principal, faithful) symbolic extension has led to a deep theory of entropy (we refer to \cite{Dowb} for an introduction to the topic). One first positive result appeared in this area is the existence of principal  symbolic extensions for asymptotically $h$-expansive systems \cite{BFF}, \cite{Down}. More recently Serafin \cite{Ser} proved that such an extension could be chosen to be faithful when $(X,T)$ is aperiodic\footnote{ This was extended by   Downarowicz and Huczek to any asymptotically $h$-expansive systems \cite{DHH}}. Here we give a new proof of these results that we relate with our Krieger like embedding  theorem (Main Theorem below) : the symbolic extension is just given by the inverse of the Krieger embedding.

\section{Krieger embedding  for asymptotically expansive systems}

We state now our main result. For two topological dynamical systems  $(X,T)$ and $(Y,S)$, a  map $\phi:X\rightarrow Y$ is called \textbf{equivariant}  when it semi-conjugates $T$ with $S$, i.e. $\phi \circ T=S\circ \phi$. Moreover we say $\phi:X\rightarrow Y$ is \textbf{$\epsilon$-injective} for some $\epsilon>0$, if there exists $\delta>0$, such that for any set $Z\subset Y$ with diameter less than $\delta$ the  preimage  $ \phi^{-1}(Z)$  has diameter less than $\epsilon$. Finally we will denote by $\phi^*:\mathcal{M}(X,T)\rightarrow \mathcal{M}(Y,S)$ the map induced by $\phi$ between the sets of probability invariant measures.

\begin{Mheo}\label{main}\nonumber
Let $(X,T)$ be a topological dynamical system  with the following properties : 

\begin{itemize}
\item $(X,T)$ has the small boundary property,
\item $\frac{1}{n}\log\sharp Per_n(X,T)\leq \log K$ for any integer $n$,
\item $h_{top}(T)<\log K$,
\item $(X,T)$ is asymptotically expansive.\\
\end{itemize}

 Then there exists an  equivariant injective Borel map $\psi:X\rightarrow \{1,...,K\}^\mathbb{Z}$ such that :
\begin{itemize}
\item   $\psi$ is a pointwise limit of a sequence of equivariant essentially continuous  $\epsilon_k$-injective maps $\psi_k$, where $(\epsilon_k)_k$ is going to zero,
\item the induced map $\psi^*$ is the uniform limit of $(\psi_k^*)_k
$ on $\mathcal{M}(X,T)$,  in particular $\psi^*$  is a topological embedding,
\item $\psi^{-1}$ is uniformly continuous on $\psi(X)$ and   the continuous extension $\pi$ of $\psi^{-1}$ on the closure $Y=\overline{\psi(X)}$ of $\psi(X)$ is a principal strongly faithful symbolic extension of $(X,T)$ with $\psi^*=(\pi^*)^{-1}$.
\end{itemize}
\end{Mheo}

As previously discussed in Subsection \ref{SBP} the above theorem applies to any asymptotically expansive finite dimensional system  with finite topological entropy and finite exponential growth of periodic points, since in this case the set of periodic points is zero-dimensional. Observe also that the asymptotic $per$-expansiveness and the inequality $h_{top}(T)<\log K$ implies that the exponential growth of periodic point $per(T):=\limsup_n\frac{\log \sharp Per_n(X,T)}{n}$ also satisfies 
$per(T)\leq h_{top}(T)< \log K$.\\

The dynamical consequences of our statement characterize the asymptotic expansiveness. More precisely we have as 
 a converse of our Main Theorem :

\begin{prop}\label{embb}
Let $(X,T)$ be a topological dynamical system.  Assume one of the two following properties : 

\begin{enumerate}
\item there exists a Borel equivariant injective map $\psi:X\rightarrow  \{1,...,K\}^\mathbb{Z}$  such that $\psi^*$ is a topological embedding,
\item there exists a strongly faithful principal extension  $\pi: (Y,\sigma)\rightarrow (X,T)$ with $Y$ a closed $\sigma$-invariant  subset of  $\{1,...,K\}^\mathbb{Z}$.\\
\end{enumerate}

Then $(X,T)$ satisfies the following properties :
 \begin{itemize}
\item $\frac{1}{n}\log\sharp Per_n(X,T)\leq  \log K$ for any integer $n$, \\
\item $h_{top} (T)\leq \log K$,\\
\item  $(X,T)$ is asymptotically  expansive. 
\end{itemize}
\end{prop}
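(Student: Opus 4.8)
The plan is to verify the three conclusions in turn, dispatching the periodic-point and entropy bounds uniformly and isolating asymptotic expansiveness as the real content. Write $\Sigma=\{1,...,K\}^\mathbb{Z}$. Under hypothesis (2), strong faithfulness gives $\pi(Per_n(Y,\sigma))=Per_n(X,T)$, so $\sharp Per_n(X,T)\leq \sharp Per_n(Y,\sigma)\leq \sharp Per_n(\Sigma,\sigma)$, while $\pi$ being a topological factor map gives $h_{top}(T)\leq h_{top}(Y,\sigma)\leq \log K$. Under hypothesis (1), equivariance and injectivity force $\psi$ to send a point of least period $n$ to a point of least period \emph{exactly} $n$ (if $\sigma^m\psi(x)=\psi(x)$ with $m\mid n$, then $\psi(T^mx)=\psi(x)$, whence $T^mx=x$), so $\psi$ embeds $Per_n(X,T)$ into $Per_n(\Sigma,\sigma)$; by the Lusin--Souslin theorem $\psi$ is moreover a Borel isomorphism onto its Borel, $\sigma$-invariant image, so for every invariant $\mu$ the systems $(X,T,\mu)$ and $(\Sigma,\sigma,\psi^*\mu)$ are measurably isomorphic, giving $h_\mu(T)=h_{\psi^*\mu}(\sigma)\leq \log K$ and, by the variational principle, $h_{top}(T)\leq \log K$.

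For asymptotic expansiveness under hypothesis (2), everything follows from results already available. Since $Y$ is a subshift it is expansive, hence both asymptotically $h$-expansive and (being expansive) asymptotically $per$-expansive. As $\pi$ is principal, Ledrappier's theorem transfers asymptotic $h$-expansiveness from $(Y,\sigma)$ to $(X,T)$; as $\pi$ is principal and strongly faithful, the result announced in Subsection~\ref{dd} transfers asymptotic $per$-expansiveness. Thus $(X,T)$ is asymptotically expansive, and hypothesis (2) is settled.

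The serious case is asymptotic expansiveness under hypothesis (1). First I would record the measure-theoretic dictionary: $\psi^*$ is the pushforward, hence affine, and being a continuous injection of the compact space $\mathcal{M}(X,T)$ it is an affine homeomorphism onto the compact convex set $\mathcal{M}'=\psi^*\mathcal{M}(X,T)\subseteq \mathcal{M}(\Sigma,\sigma)$; it carries ergodic measures to ergodic measures and period-$n$ orbit measures to period-$n$ orbit measures, and by the first paragraph it intertwines the entropy functions, $h_\bullet(T)=h_{\psi^*\bullet}(\sigma)$. In particular $h_\bullet(T)$ is upper semicontinuous, being the composition of the upper semicontinuous entropy of the full shift with the continuous map $\psi^*$. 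To upgrade this to asymptotic expansiveness I would invoke the variational principles governing the two defining quantities: Downarowicz's variational principle for the tail entropy $\inf_{\mathcal{U}}\sup_{\mathcal{V}}h(\mathcal{V}|\mathcal{U})$, together with an analogous principle (which one must establish for the new notion) for the periodic growth $\inf_{\mathcal{U}}per(T|\mathcal{U})$, each expressing the quantity as a supremum over $\mu\in\mathcal{M}(X,T)$ of a local defect read off from the entropy function and the periodic empirical measures against the weak-$*$ topology. Because $\psi^*$ is a topological embedding preserving entropy and the periodic-measure correspondence, one would then transport these local defects to the corresponding defects of $(\Sigma,\sigma)$, which vanish since $\Sigma$ is expansive, concluding that both infima vanish.

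The main obstacle lies exactly in this last transport. Since $\psi$ is only Borel, the open covers $\mathcal{U},\mathcal{V}$ entering the definitions pull back to measurable, not open, covers of $X$ whose diameters need not shrink, and a Borel conjugacy alone preserves the entropy function but not the finer scale information that governs asymptotic $h$-expansiveness; so the transfer cannot be purely measure-theoretic. It must genuinely use that $\psi$ is defined \emph{everywhere} on $X$ and that $\psi^*$ is a homeomorphism onto $\mathcal{M}'$, so that the small-scale complexity inside a dynamical cell can be re-expressed through invariant measures accumulating at a point and then compared with the trivial local behaviour of the expansive shift. Setting up the variational principle for periodic-point growth and checking the compatibility of both local defects with the affine homeomorphism $\psi^*$ is, I expect, the crux of the argument.
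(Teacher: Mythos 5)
Your first two paragraphs are correct and essentially match the paper: least periods are preserved by an injective equivariant map, Lusin--Souslin upgrades $\psi$ to a Borel isomorphism onto its image so that entropy of invariant measures is preserved and the variational principle gives $h_{top}(T)\leq \log K$; and under hypothesis (2) one combines Ledrappier's theorem with the transfer of $per$-expansiveness through a principal strongly faithful extension. One caveat: the transfer you cite is exactly the statement proved in Subsection~\ref{dd}, which in the paper is \emph{part of the proof} of Proposition~\ref{embb}; a self-contained argument should include it (it is short: principality means $h(\sigma|T)=0$, so for large $n$ each $\pi^{-1}U^n$ is covered by $e^{\epsilon n/2}$ sets $V^n$ of a cover witnessing $per$-expansiveness of the subshift, while strong faithfulness makes $\pi$ a bijection from the $n$-periodic points of $\sigma$ in $\pi^{-1}U^n$ onto those of $T$ in $U^n$, giving the bound $e^{\epsilon n}$).

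The genuine gap is hypothesis (1), and you flag it yourself: your last two paragraphs are a plan, not a proof, and the plan is not the one that works. There is no variational principle for $per(T|\mathcal{U})$ available (the paper never establishes one), and the transport-of-local-defects idea founders on precisely the obstacle you name: the entropy structure (the ``finer scale information'') is not an invariant of a Borel conjugacy, so nothing can be read off from the abstract entropy function alone. What the paper does instead is concrete and uses the topological-embedding hypothesis on $\psi^*$ at a specific point. For asymptotic $h$-expansiveness: the finite partition $P=\psi^{-1}P_0$ is generating, i.e.\ $h(\mu)=h(\psi^*\mu,P_0)=h(\mu,P)$ for every invariant $\mu$, and --- because $\psi^*$ is a homeomorphism onto its image and cylinders are clopen --- each $A\in P$ has $\mu\mapsto\mu(A)$ continuous on $\mathcal{M}(X,T)$ (both semicontinuities come from the two directions of the homeomorphism). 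Feeding this into the Lebesgue entropy structure $(h_k)_k$, the functions $\mu\mapsto h(\mu\times\lambda,\,P\times\mathbb{S}^1\vee P_k\mid P_k)$ are upper semicontinuous, nonincreasing in $k$, and pointwise convergent to $0$, hence uniformly convergent by a Dini-type argument; since they dominate $h-h_k$, the tail variational principle gives asymptotic $h$-expansiveness. Note this is strictly stronger than the upper semicontinuity of $h_\bullet(T)$ you derive, which by itself does not imply asymptotic $h$-expansiveness. For asymptotic $per$-expansiveness the paper again argues directly: reduce open covers to essential partitions $P_l$ (or to the product with an irrational rotation if the small boundary property fails), pick $A^n\in P_l^n$ maximizing $\sharp\left(A^n\cap Per_n(X,T)\right)$, let $\nu_n$ be the invariant barycenter of the corresponding periodic orbit measures, and use that each cylinder $B^n\in P_0^n$ of the full shift contains at most one $n$-periodic point to bound $\frac{1}{n}\log\sharp\left(A^n\cap Per_n(X,T)\right)$ above by $\frac{1}{m}H_{\nu_n}(P^m|P_l^m)$ up to an error $O(m/n)$; the continuity of $\mu\mapsto\mu(A)$ for $A\in P$ then allows passage to a weak limit $\nu$, yielding the bound $h(\nu,P|P_l)$, which tends to $0$ uniformly in $\nu$ as $l\to\infty$ by the asymptotic $h$-expansiveness just established. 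These two constructions are the actual content of the proposition under hypothesis (1); without them your argument is incomplete.
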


Together with the Main Theorem we obtain in  particular that when $h_{top}(T)<\log K$ and $X$ has the small boundary property then the assumptions (1) and (2) in Proposition \ref{embb} are equivalent.\\

In contrast with Theorem \ref{kkk} and \ref{kkkk}, for topological systems $(X,T)$ with $h_{top}(T)=\log K$, the existence of an embedding on the set of invariant measures of $(X,T)$ and the $K$-full shift does not seems to create other constraints on $(X,T)$: in particular such an embedding may not be onto as we can see  in the following example. 

 There exists by Jewett-Krieger ergodic theorem for any integer $K$ a uniquely ergodic  dynamical system with topological entropy equal to $\log K$ (such a system is in particular aperiodic). 
It easily follows from the tail variational principle \cite{burg} that uniquely ergodic systems are asymptotically $h$-expansive. When the unique ergodic measure is Bernoulli then Theorem \ref{kkk} gives a Borel equivariant map inducing a non surjective  embedding in the set of invariant measures of the $K$-full shift.

However the case of equality in the conditions on periodic points creates some "rigidity".  Indeed, if we assume  $\sharp Per_n(X,T)=\sharp Per_n(\{1,...,K\}^\mathbb{Z},\sigma)$ for any integer $n$, then for any Borel equivariant map $\psi:X\rightarrow  \{1,...,K\}^\mathbb{Z}$  such that $\psi^*$ is a topological embedding, the map $\psi^*$ is in fact an homeomorphism  because periodic measures are dense in the set of invariant measures of the full shift.\\

As the previously mentioned embedding theorems our main result brings some new contributions in the classification of dynamical systems. In \cite{dowi} T. Downarowicz investigates the characterization of \textit{assignments} arising from  topological systems. An assignment is a function $\psi$ defined on an abstract metrizable
Choquet simplex $K$, whose values are measure-theoretic dynamical systems,
i.e., for $p\in K$, $\psi(p)$ has the form $(X_p, \mathcal{B}_p, \mu_p, T_p)$. Two assignments, $\psi$ on
a simplex $K$ and $\psi'$ on a simplex $K'$, are said to be equivalent if there exists
an affine homeomorphism of Choquet simplexes $\pi: K \rightarrow  K'$ such that for every
$p\in K$ the systems $\psi(p)$ and  $\psi'(p')$, where $p' = \pi(p)$, are isomorphic. A
topological dynamical system $(X, T)$ determines a natural assignment on the
simplex $\mathcal{M}(X,T)$ by the rule $\mu \mapsto  (X, \mathcal{B}, \mu, T)$ with $\mathcal{B}$ being the usual Borel $\sigma$-algebra of $X$. As a consequence of our Main Theorem, subshifts and asymptotically expansive systems have the same assignments, i.e. the natural assignment of an asymptotically expansive system is equivalent to the  natural assignment of some subshift.

\section{The case of zero-dimensional systems}
We first consider zero-dimensional dynamical systems. Then in Section \ref{red} we will deal with general systems with the small boundary property by reduction to the zero-dimensional  case.

We will prove the following strong version (continuity replaces essential continuity) of our Main theorem for zero-dimensional systems :

\begin{prop}\label{zer}
Let $(X,T)$ be a zero-dimensional topological dynamical system with the following properties : 

\begin{itemize}
\item $\sharp Per_n(X,T)\leq \sharp Per_n(\{1,...,K\}^\mathbb{Z},\sigma)$ for any integer $n$,
\item $h_{top}(T)<\log K$,
\item $(X,T)$ is asymptotically expansive.\\
\end{itemize}

 Then there exists an  equivariant injective Borel map $\psi:X\rightarrow \{1,...,K\}^\mathbb{Z}$ such that :
\begin{itemize}
\item   $\psi$ is a pointwise limit of a sequence of equivariant  continuous  $\epsilon_k$-injective maps $\psi_k$, where $(\epsilon_k)_k$ is going to zero,
\item the induced map $\psi^*$ is the uniform limit of $(\psi_k^*)_k
$  on $\mathcal{M}(X,T)$, in particular $\psi^*$  is a topological embedding,
\item $\psi^{-1}$ is uniformly continuous on $\psi(X)$ and  the continuous extension $\pi$ of $\psi^{-1}$ on the closure $Y=\overline{\psi(X)}$ of $\psi(X)$ is a principal strongly faithful symbolic extension of $(X,T)$ with $\psi^*=(\pi^*)^{-1}$.
\end{itemize}
\end{prop}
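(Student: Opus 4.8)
# Proof Proposal for Proposition 3.1 (Zero-dimensional case)

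The plan is to build the embedding $\psi$ as a pointwise limit of a sequence of equivariant continuous maps $\psi_k$, where each $\psi_k$ codes $X$ at finer and finer scales. Since $X$ is zero-dimensional, I can fix a decreasing sequence of finite clopen partitions $\mathcal{P}_k$ whose diameters tend to zero and which generate the topology. Each $\mathcal{P}_k$ together with the dynamics gives a natural continuous equivariant factor map onto a subshift $\Sigma_k$ over the alphabet $\mathcal{P}_k$; these subshifts form an inverse system whose limit is conjugate to $(X,T)$. The task is then to \emph{recode} these increasing-resolution subshifts into the single full shift $\{1,\dots,K\}^\mathbb{Z}$ in a way that is injective in the limit, respects periodic-point counts, and keeps entropy below $\log K$.

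The key technical device should be a Krieger-style marker-and-filler construction carried out uniformly across scales. First I would exploit asymptotic $h$-expansiveness: because $h_{top}(T)<\log K$ and the conditional entropy $h(\mathcal{V}\mid \mathcal{U}_k)$ becomes uniformly small, the \emph{new} information needed to pass from resolution $k$ to resolution $k+1$ occupies exponentially few symbols, so I can reserve a vanishing density of coordinates to encode the refinement while the bulk of the sequence is spent realizing a generator at the coarse scale. Concretely, at stage $k$ I would use the Krieger embedding theorem for expansive zero-dimensional systems (Theorem \ref{kkkk}) applied to the subshift $\Sigma_k$ — whose periodic-point and entropy hypotheses are inherited from $X$ via asymptotic expansiveness — to obtain a continuous equivariant $\epsilon_k$-injective map $\psi_k:X\to\{1,\dots,K\}^\mathbb{Z}$. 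The asymptotic $per$-expansiveness is exactly what guarantees $\sharp Per_n(\Sigma_k)\le \sharp Per_n(\{1,\dots,K\}^\mathbb{Z},\sigma)$ for all $k$ and $n$, so the periodic-point inequality survives to each finite stage. I would arrange the coding so that $\psi_{k+1}$ agrees with $\psi_k$ outside the sparse reserved coordinates, forcing pointwise (indeed uniform on each coordinate) convergence to a limit map $\psi$.

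Once $\psi=\lim_k\psi_k$ exists as a Borel map, I would verify each conclusion in turn. Injectivity of $\psi$ follows because $\epsilon_k\to 0$ makes the family $(\psi_k)$ jointly separate points at all scales; $\epsilon_k$-injectivity of each $\psi_k$ combined with $\mathrm{diam}(\mathcal{P}_k)\to 0$ gives uniform continuity of $\psi^{-1}$ on $\psi(X)$. For the induced maps on measures, $\psi_k^*$ is continuous for every $k$, and the uniform (in measure) control on how much $\psi_{k+1}$ perturbs $\psi_k$ — again coming from the vanishing density of reserved coordinates together with upper semicontinuity of entropy — yields uniform convergence $\psi_k^*\to\psi^*$, whence $\psi^*$ is a topological embedding. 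Finally, extending $\psi^{-1}$ continuously to $Y=\overline{\psi(X)}$ produces the factor map $\pi:(Y,\sigma)\to(X,T)$; principality follows from the entropy-preservation built into the construction (all entropy of $X$ is already visible at some finite scale, by asymptotic $h$-expansiveness and Ledrappier's theorem for principal extensions), and strong faithfulness follows from tracking that periodic points of $Y$ correspond bijectively to those of $X$ at each period $n$.

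I expect the main obstacle to be the simultaneous, uniform control of the recoding across all scales: I must ensure that the marker coordinates reserved at stage $k$ for future refinements do not accumulate, that the entropy and periodic-point budgets are never violated at any finite stage, and — most delicately — that the perturbations $\psi_{k+1}^*-\psi_k^*$ on $\mathcal{M}(X,T)$ are summable \emph{uniformly} over all invariant measures, so that $\psi^*$ is genuinely a uniform limit rather than merely a pointwise one. This is precisely where asymptotic expansiveness, rather than mere finiteness of entropy, is essential, and the quantitative bookkeeping of reserved-coordinate densities is the heart of the argument.
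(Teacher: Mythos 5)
Your overall philosophy is the right one and matches the paper's (multiscale coding in which asymptotic $h$-expansiveness lets the refinement from scale $k$ to $k+1$ occupy a vanishing density $\alpha/2^k$ of coordinates), but the central mechanism you propose does not work, and this is a genuine gap rather than a detail. You want to obtain each $\psi_k$ by applying Krieger's Theorem~\ref{kkkk} as a black box to the factor subshift $\Sigma_k$. First, the hypotheses of Theorem~\ref{kkkk} are not inherited by $\Sigma_k$: periodic points of a factor need not lift to periodic points of $X$ (the dyadic odometer is aperiodic, yet its clopen coding factors are finite cycles), and if $\sharp\mathcal{P}_k>K$ the factor can even have more than $K$ fixed points, violating $\sharp Per_n(\Sigma_k)\leq\sharp Per_n(\{1,\dots,K\}^{\mathbb{Z}},\sigma)$; asymptotic $per$-expansiveness of $X$ constrains periodic points \emph{of $X$} inside small sets and says nothing about periodic points created in factors, so your claimed implication is false. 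Second, and more fundamentally, even where Theorem~\ref{kkkk} applies it produces embeddings of $\Sigma_k$ and $\Sigma_{k+1}$ with no relation whatsoever to each other: there are no ``reserved coordinates.'' The statement ``I would arrange the coding so that $\psi_{k+1}$ agrees with $\psi_k$ outside the sparse reserved coordinates'' is precisely the theorem's whole difficulty, and it cannot be extracted from a citation; it must be built by hand. The paper does this with nested clopen partitions $\mathcal{V}_k$ satisfying $h(\mathcal{V}_{k+1}|\mathcal{V}_k)<\alpha/2^k$ (Lemma~\ref{hex}), nested topological Rokhlin towers (the marker Lemma~\ref{mar}), and an explicit hierarchical block structure in which the conditional $\mathcal{V}_{k+1}|\mathcal{V}_k$ information is written into positions deliberately left free at earlier stages; periodic points of $X$ then force a separate ``singular block'' construction together with Lemma~\ref{coper}, which your sketch omits entirely.

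Separately, even granting the limit map $\psi$, your argument for the third bullet is missing its key ingredient. The closure $Y=\overline{\psi(X)}$ genuinely contains points outside $\psi(X)$ (see the paper's odometer remark), so principality and strong faithfulness require proving that \emph{every} $\sigma$-invariant measure on $Y$ is supported on $\psi(X)$; otherwise the closure could carry extra entropy or extra periodic orbits. Ledrappier's theorem cannot supply this: it presupposes a principal extension, which is what is to be proved. The paper's mechanism is the Besicovitch pseudometric machinery: uniform convergence of the continuous maps $\psi_k$ with respect to $(d_N)_N$ gives $\overline{\psi(X)}\subset\psi(X)^{\infty}$ (Lemma~\ref{sat}), points at $d_\infty$-distance zero have the same limits of empirical measures (Lemma~\ref{emp}), and hence every ergodic measure on $Y$ is the image under $\psi^*$ of a measure on $X$ (Lemma~\ref{sssup}); the same machinery, via Lemma~\ref{cvu}, is what yields the \emph{uniform} convergence $\psi_k^*\to\psi^*$, for which your appeal to upper semicontinuity of entropy is not a substitute. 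Any correct writeup needs either these lemmas or an equivalent quantitative control relating coordinate-density of changes to invariant measures on the closure.
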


In the next Subsections 4.1, 4.2 and 4.3 we develop some tools used later in the proof of Proposition \ref{zer}.  

\subsection{Asymptotic expansivenness for zero-dimensional systems}
Let $(X,T)$ be a zero-dimensional system. In the definition of asymptotic $h$-expansiveness given in Subsection 2.2 we may choose the open covers $\mathcal{U}$ and $\mathcal{V}$ to be finite clopen partitions with $\mathcal{V}>\mathcal{U}$, i.e. any element $V$ of $\mathcal{V}$ is contained in a  element $U$ of $\mathcal{U}$. The topological conditional entropy 
$h(\mathcal{V}|\mathcal{U})$ may be then rewritten as follows 

$$h(\mathcal{V}|\mathcal{U}):=\lim_n\frac{1}{n} \sup_{U^n\in \mathcal{U}^n}\log\sharp\{ V^n\in \mathcal{V}^n, \ V^n\subset U^n\}.$$

The following lemma used later in the proof of Proposition  \ref{zer} follows directly from the definition of asymptotical  expansiveness independently from the above expression of the topological conditional entropy.

\begin{lemma}\label{hex}Let $(X,T)$ be an asymptotically expansive  zero-dimensional system. For all $\alpha>0$  there exists a decreasing sequence  $(\mathcal{V}_k)_k$ of clopen partitions whose diameter goes to zero, such that  for all $k\geq 1$:

\begin{itemize}
\item $h(\mathcal{V}_{k+1}|\mathcal{V}_k)<\alpha/2^k$,
\item $per(T|\mathcal{V}_k)<\alpha/2^k$.
\end{itemize}
\end{lemma}

\subsection{The  marker property for zero-dimensional systems} One key tool in our construction is the following  "marker property". A similar approach is used to build symbolic extensions in \cite{BD} where the product with an odometer is used to mark the shift space in the same way.

\begin{lemma}(Lemma 7.5.4 in \cite{Dowb})\label{mar}
Let $(X,T)$ be a zero-dimensional  dynamical system. Then  for every positive integer $n$ and for every   $\epsilon$  there exists a  clopen set $U$ such that  :
\begin{itemize}
\item  $T^iU$ are pairwise disjoint for $i=0,...,n-1$,
\item  $\bigcup_{|i|<n}T^{i}U= X\setminus Per_{n}^{\epsilon}$, where  $ Per_{n}^{\epsilon}$ denotes a clopen  $\epsilon$-neighborhood of the set $\bigcup_{m\leq n}Per_{m}(X,T)$ of periodic points with least period less than or equal to $n$. 
\end{itemize}
\end{lemma}

\subsection{Some tools on shift spaces}
We consider in this section a finite set $\Lambda$ and a compact metrizable space $X$. We let $P_0$ be the zero coordinate partition of $\Lambda^\mathbb{Z}$.

\subsubsection{Decreasing metrics  on  $\Lambda^\mathbb{Z}$}

 The product space $\Lambda^\mathbb{Z}$ will be endowed with different metrics. We first let $d$ be a metric inducing the usual product topology (we consider the discrete topology on $\Lambda$), for example the Cantor metric defined as  
 $$\forall x,y\in \Lambda^\mathbb{Z}, \ d(x,y):=2^{-k} \text{ where } k=\min\{i\geq 0, \ x_{-i}\neq y_{-i} \text{ or } x_i\neq y_i\}.$$
Also  for any integer $N$ we consider the metric $d_N$ defined as $$\forall x,y\in \Lambda^\mathbb{Z}, \ d_N(x,y):=\sup_{n\geq N}\left\{\frac{1}{2n+1}\sharp \{i, \  |i| \leq n , \ x_i\neq y_i\right\}.$$ For any $N$ the topology given by $d_N$ is stronger than the product topology.   Observe now that the sequence of metrics $(d_N)_N$ is nonincreasing, i.e. $(d_N(x,y))_N$ is nonincreasing in $N$ for any $x,y\in \Lambda^\mathbb{Z}$. We let $d_\infty$ be the (shift-invariant) pseudometric on $\Lambda^\mathbb{Z}$ given by the limit of $(d_N)_N$ 

$$d_\infty(x,y):=\lim_Nd_N(x,y).$$

The pseudometric $d_\infty$ is called the Besicovitch pseudometric on the shift space. Topological properties of the induced quotient  metric space and dynamical properties of cellular automata on this space were studied in \cite{Bl}. In particular, although we do not used here  directly, this quotient metric space is known to be complete. 

\subsubsection{Convergence of  functions taking values in $\Lambda^\mathbb{Z}$}

We consider now maps from $X$  to $\Lambda^\mathbb{Z}$  and we define different kinds of convergence. A sequence of such maps $(\psi_k)_k$ is said to converge to $\psi$ :

\begin{itemize}
\item \textbf{pointwisely with respect to $d$}, when for all $x\in X$, 
$$ d(\psi_kx,\psi x)\xrightarrow{k\rightarrow +\infty}0,$$
\item \textbf{uniformly  with respect to  $d_\infty$}, when 
$$ \sup_{x\in X}d_\infty(\psi_kx,\psi x)\xrightarrow{k\rightarrow +\infty}0,$$
\item \textbf{uniformly with respect to  $(d_N)_N$}, when 
$$ \sup_{x\in X}d_N(\psi_k x,\psi x)\xrightarrow{N,k\rightarrow +\infty}0.$$
\end{itemize}

Observe that if $(\psi_k)_k$ is converging to $\psi$ uniformly with respect to the decreasing sequence of pseudometrics $(d_N)_N$,  then it is also converging  to $\psi$ uniformly  with respect to its limit $d_{\infty}$.\\

We let $\mathcal{M}(\Lambda^\mathbb{Z}, \sigma)$ be the set of probability Borel measures endowed with the following  metric $d_*$ inducing the weak $*$-topology : $\forall \mu, \nu \in \mathcal{M}(\Lambda^\mathbb{Z},\sigma)$,
$$d_*(\mu,\nu)=\sum_n\frac{|\mu(A_n)-\nu(A_n)|}{2^n},$$
where $(A_n)_n$ is a given enumeration of $\bigcup_{N\in \mathbb{N}}P_0^N$.
 We also consider the space $\mathcal{K}(\Lambda^\mathbb{Z}, \sigma)$  of compact subsets of $\mathcal{M}(\Lambda^\mathbb{Z},\sigma)$ endowed with the Hausdorff metric $d_H$ associated to $d_*$. The set $\mathcal{M}(\Lambda^\mathbb{Z},\sigma)$ being compact it is well known that $\mathcal{K}(\Lambda^\mathbb{Z}, \sigma)$ is also compact.  For all $x\in \Lambda^\mathbb{Z}$  we let $\phi(x)\in \mathcal{K}(\Lambda^\mathbb{Z},\sigma)$ be the set of limits of the empirical measures, i.e. the accumulation points of the sequence $(\frac{1}{n}\sum_{0\leq k<n}\delta_{\sigma^k x})_n$ for the weak $*$-topology.

\begin{lemma}\label{emp}
Let $\psi:X\rightarrow \Lambda^\mathbb{Z}$  be a uniform limit with respect to $d_\infty$ of $(\psi_k)_k$. Then 
$(\phi\circ \psi_k)_k$ converge uniformly to $\phi\circ \psi$ with respect to $d_H$.
\end{lemma}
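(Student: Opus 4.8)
The plan is to show that the map $\phi:(\Lambda^\mathbb{Z},d_\infty)\to(\mathcal{K}(\Lambda^\mathbb{Z},\sigma),d_H)$ is uniformly continuous, with a modulus of continuity not depending on the point; the lemma then follows at once, since the hypothesis says precisely that $\sup_x d_\infty(\psi_k x,\psi x)\to 0$, and uniform continuity of $\phi$ converts this into $\sup_x d_H(\phi\psi_k x,\phi\psi x)\to 0$. Thus it suffices to prove: for every $\epsilon>0$ there is $\delta>0$ such that $d_\infty(x,y)<\delta$ implies $d_H(\phi(x),\phi(y))<\epsilon$ for all $x,y\in\Lambda^\mathbb{Z}$.

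The core is a frequency estimate comparing empirical measures $\mu_n^x:=\frac1n\sum_{0\le k<n}\delta_{\sigma^k x}$. Each cylinder $A_i$ in the enumeration of $\bigcup_N P_0^N$ is determined by the coordinates in some window $[-m_i,m_i]$, so whether $\sigma^k x\in A_i$ depends only on $(x_{k-m_i},\dots,x_{k+m_i})$. Hence the indicators of $\sigma^k x\in A_i$ and $\sigma^k y\in A_i$ can disagree only when $x_j\ne y_j$ for some $j\in[k-m_i,k+m_i]$, and each coordinate of disagreement affects at most $2m_i+1$ values of $k$. This yields
\[ |\mu_n^x(A_i)-\mu_n^y(A_i)|\le \frac{2m_i+1}{n}\,\#\{j\in[-m_i,n+m_i):x_j\ne y_j\}\le \frac{(2m_i+1)(2(n+m_i)+1)}{n}\,d_{n+m_i}(x,y), \]
where the last step uses $d_{N}(x,y)\ge \frac{1}{2N+1}\#\{|j|\le N:x_j\ne y_j\}$ with $N=n+m_i$. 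Letting $n\to\infty$ with $m_i$ fixed, the prefactor tends to $2$ while $d_{n+m_i}(x,y)\to d_\infty(x,y)$, so any two limit measures obtained along a common subsequence satisfy $|\mu(A_i)-\nu(A_i)|\le 2(2m_i+1)\,d_\infty(x,y)$.

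Next I would match limit points. Given $\mu\in\phi(x)$, write $\mu=\lim_l\mu_{n_l}^x$; by compactness of $\mathcal{M}(\Lambda^\mathbb{Z},\sigma)$ I extract a subsequence along which $\mu_{n_l}^y$ also converges, to some $\nu$, which is then an accumulation point of $(\mu_n^y)_n$, i.e. $\nu\in\phi(y)$. Since $d_*$ induces the weak-$*$ topology, $d_*(\mu,\nu)=\lim_l d_*(\mu_{n_l}^x,\mu_{n_l}^y)$, and summing the coordinatewise bound over the enumeration, splitting into a finite head $i\le I$ (controlled by the estimate above) and a tail $i>I$ (bounded by $\sum_{i>I}2^{-i}=2^{-I}$ since all terms lie in $[0,1]$), gives $d_*(\mu,\nu)\le C_I\,d_\infty(x,y)+2^{-I}$ with $C_I:=\sum_{i\le I}2(2m_i+1)2^{-i}<\infty$. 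Choosing first $I$ with $2^{-I}<\epsilon/2$ and then $\delta=\epsilon/(2C_I)$ forces $d_*(\mu,\nu)<\epsilon$; by symmetry this bounds both one-sided Hausdorff distances, so $d_H(\phi(x),\phi(y))\le\epsilon$, which is the claimed uniform continuity.

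The routine pieces (the counting inequality, the nonemptiness, compactness and $\sigma$-invariance of $\phi(x)$, and the continuity of $d_*$) are standard. I expect the only genuinely delicate point to be the matching of limit points in the third paragraph: the coordinatewise estimate holds only for $\mu_n^x$ and $\mu_n^y$ taken at the \emph{same} $n$, so one cannot compare arbitrary elements of the two limit sets but must pass from $\mu\in\phi(x)$ to a partner $\nu\in\phi(y)$ along a common subsequence. The diagonal/compactness extraction above is exactly what legitimizes both one-sided Hausdorff bounds, and it is the step where care is needed.
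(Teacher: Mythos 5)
Your proposal is correct and follows essentially the same route as the paper: the heart of both arguments is the counting estimate that a single coordinate disagreement between two sequences can affect the visits of at most (width of the cylinder) many shifted points to a fixed cylinder $A$, yielding $\limsup_n|\mu_n^x(A)-\mu_n^y(A)|\leq C_A\, d_\infty(x,y)$, applied with $x=\psi_k(\cdot)$ and $y=\psi(\cdot)$ uniformly over the base point. The only difference is packaging: you phrase the estimate as uniform continuity of $\phi$ from $(\Lambda^\mathbb{Z},d_\infty)$ to $(\mathcal{K}(\Lambda^\mathbb{Z},\sigma),d_H)$ and you spell out the reduction (matching accumulation points along a common subsequence and summing over the enumeration of cylinders) that the paper compresses into the phrase ``it is enough to prove''.
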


When $X$ is $\Lambda^\mathbb{Z}$  and  $x,y$ are two points in $\Lambda^\mathbb{Z}$ with $d_\infty(x,y)=0$, then by taking $\psi$ constant equal to $x$ and $\psi_k$ constant equal to $y$ for any $k$, we get then  $\phi(x)=\phi(y)$.

\begin{proof}
It is enough to prove that for any $A\in \bigcup_{N\in \mathbb{N}}P_0^N$ we have 

$$\lim_k\sup_{x\in X}\limsup_n  \frac{1}{n}\left|\sum_{0\leq l<n}\left(\delta_{\sigma^l \circ \psi_k (x)}(A)-\delta_{\sigma^l \circ \psi (x)}(A)\right)\right|=0.$$
  
 Fix $N\in \mathbb{N}$, $A\in P_0^N$, $x\in X$ and $k\in \mathbb{N}$. Then 
\begin{eqnarray*}
\limsup_n \frac{1}{n}\left|\sum_{0\leq l<n}\left(\delta_{\sigma^l \circ \psi_k (x)}(A)-\delta_{\sigma^l \circ \psi (x)}(A)\right)\right| &\leq &  \limsup_n \frac{1}{n}\sum_{0\leq l<n}\left|\delta_{\sigma^l \circ \psi_k (x)}(A)-\delta_{\sigma^l \circ \psi (x)}(A)\right|,\\
&\leq & \limsup_n  \frac{N}{n}\sharp \{0\leq l\leq n+N, \  (\psi_k(x))_l\neq (\psi(x))_l\},\\
& \leq & N d_\infty(\psi_k(x),\psi(x)).
\end{eqnarray*}

By uniform convergence of $(\psi_k)_k$ to $\psi$ with respect to $d_\infty$, this last term goes to zero uniformly in $x$ when $k$ goes to infinity. This concludes  the proof the lemma.
\end{proof}

For a subset $Y$ of $\Lambda^\mathbb{Z}$ we let $\overline{Y}$ be the closure of $Y$ for the product topology and we let $Y^\infty$ be the $d_\infty$-saturated set of $Y$, i.e. $Y^\infty=\{x\in \Lambda^\mathbb{Z}, \ \exists y\in Y, \ d_\infty(x,y)=0\}$. 

\begin{lemma}\label{sat}
Let $\psi:X\rightarrow \Lambda^\mathbb{Z}$  be both a pointwise limit with respect to $d$ and a uniform limit with respect to $(d_N)_N$  of $(\psi_k)_k$. Assume moreover the maps $(\psi_k)_k$ are 
 continuous (for the  product topology on $\Lambda^\mathbb{Z}$), then 
$$\overline{\psi(X)}\subset \psi(X)^\infty.$$
\end{lemma}

\begin{proof}
Let $(y_n=\psi(x_n))_n$ be a sequence  converging with respect to $d$ in $\psi(X)$ to say $y$. We consider a sequence of continuous maps $(\psi_k)_k$ converging to $\psi$ uniformly with respect to $(d_N)_N$. By definition for all $\epsilon>0$ there exist integers  $K$ and $M$ such that for all $k\geq K$ and for all $N\geq M$ we have  for all integers $n$ that $d_N(\psi(x_n),\psi_k(x_n))\leq \epsilon$. Observe the function $(x,y)\in (X,d)^2\mapsto d_N(x,y)$ is lower semicontinuous as a supremum of continuous functions. Up to extracting a subsequence we may assume by compacity of $X$ that $(x_n)_n$ is converging to $x\in X$. Therefore by taking the limit in $n$ in the previous inequality we obtain by continuity of $\psi_k$  for all $k\geq K$ and for all $N\geq M$  
\begin{eqnarray*}
d_N(y,\psi_k(x))&\leq &\epsilon.
\end{eqnarray*}

and then by pointwise convergence of $(\psi_k)_k$ to $\psi$ in $(\Lambda^\mathbb{Z},d)$ we have for all $N\geq M$
$$d_N(y, \psi(x))\leq  \epsilon.$$

Finally we let $N$ go to infinity to get :
$$d_\infty(y,\psi(x))\leq \epsilon.$$

This concludes the proof of the lemma as $\epsilon>0$ may be chosen arbitrarily small.
\end{proof}

\subsubsection{Dynamical consequences}
We let now  $T$ be an invertible map acting  continuously on $X$.   For a Borel map $\xi:X\rightarrow \Lambda^\mathbb{Z}$ we let $\xi^*:\mathcal{M}(X,T)\rightarrow \mathcal{M}(\Lambda^\mathbb{Z},\sigma)$ be the map induced by $\xi$ on the set of invariant Borel probability measures. As done for the full shift in the previous subsection we denote  for any $x\in X$ by $\phi(x)\subset \mathcal{M}(X,T) $ the set  of limits of empirical measures at $x$, i.e. the set of  accumulation points of the sequence $(\frac{1}{n}\sum_{0\leq k<n}\delta_{T^k x})_n$ for the weak-* topology.

\begin{lemma}\label{cvu}
Let  $\psi:X\rightarrow \Lambda^\mathbb{Z}$  be a uniform limit with respect to $d_\infty$ of $(\psi_k)_k$. Assume moreover $\psi$ and $(\psi_k)_k$ are equivariant Borel  maps. 

Then the induced maps $(\psi_k^*)_k$ converge uniformly to $\psi^*$ with respect to $d_*$.
\end{lemma}

\begin{proof}
 By the ergodic decomposition it is enough to consider only ergodic $T$-invariant measures $\mu$. By Birkhoff ergodic theorem we have for $\mu$-almost every $x$ and for all finite cylinders $A$ in $\Lambda^\mathbb{Z}$ :

$$\mu(\psi^{-1}A)=\lim_n\frac{1}{2n -1} \sum_{|l|<n}\delta_{T^lx}(\psi^{-1}A).$$

By equivariance we have

\begin{eqnarray*}
\mu( \psi^{-1}A) &= & \lim_n\frac{1}{2n -1} \sum_{|l|<n}\delta_{\psi\circ T^l(x)}(A),\\
&= & \lim_n\frac{1}{2n -1}\sum_{|l|<n}\delta_{\sigma^l \circ \psi (x)}(A).
\end{eqnarray*}

Since this holds for any  $A$ the sequence of empirical measures \\ $(\frac{1}{2n -1}\sum_{|l|<n}\delta_{\sigma^l \circ \psi (x)})_n$ is converging to $\psi^*\mu$ in the weak-*topology, in others terms we have $\{\phi\circ\psi(x)=\psi^*\mu\}$ for  $x$ in a set $E^\mu$ of full $\mu$-measure. Similarly for any integer $k$ we have $\phi\circ\psi_k(x)=\{\psi_k^*\mu\}$ for $x$ in a set $E_k^\mu$ of full $\mu$-measure.

 As by Lemma \ref{emp} the sequence $(\phi\circ \psi_k)_k$ converges uniformly to $\phi\circ \psi$ with respect to $d_H$   we conclude that $\psi_k^*$ converges to $\psi^*$ uniformly with respect to $d_*$. Indeed for any $\mu$ we take $x_\mu \in E^\mu\cap \bigcap_kE^\mu_k$ and  we conclude that:

\begin{eqnarray*}
\sup_\mu d_*(\psi_k^*\mu, \psi^*\mu)&= & \sup_{x_\mu, \mu}d_*(\phi\circ \psi_k(x_\mu),\phi\circ \psi(x_\mu)),\\
&\leq &  \sup_{x}d_H(\phi\circ \psi_k(x),\phi\circ \psi(x)),\\
& \xrightarrow{k\rightarrow +\infty}0. 
\end{eqnarray*}
\end{proof}

\begin{rem}The above lemma may also be proved by using the Ornstein's $d$-bar distance. Indeed it follows from Theorem I.9.10 in \cite{Sh} that for any $\mu\in \mathcal{M}(X,T)$ the $d$-bar distance of $\psi^*\mu$ and $\psi^*_k\mu$ is less than $\sup_{x\in X}d_\infty(\psi_k x, \psi(x) )$.  Moreover it is well known the $d$-bar distance is stronger than the weak-star topology. 
\end{rem}

Assuming moreover continuity of the maps $(\psi_k)_k$  we prove the identity $\psi^*\phi(x)=\phi\circ \psi(x)$ for every $x\in X$. 

\begin{lemma}\label{bnb}Let $\psi:X\rightarrow \Lambda^\mathbb{Z}$  be a uniform limit with respect to $d_\infty$ of $(\psi_k)_k$. Assume moreover $\psi$  (resp. $(\psi_k)_k$) are  equivariant Borel (resp. continuous) maps. 

Then for all $x\in X$, 
$$\psi^*\phi(x)=\phi\circ \psi(x).$$
\end{lemma}

\begin{proof}
The maps $\psi_k$ being continuous  the associated maps $\psi_k^*$ induced  on the set of Borel probability measures on $X$ endowed with the weak-* topology are also continuous and therefore we have the equality 
$\psi_k^*\phi=\phi\circ\psi_k$. By the above Lemma \ref{cvu}, the left member goes uniformly  to $\psi^*\phi$ (with respect to $d_H$), whereas the right member  goes uniformly to $\phi\circ \psi$ when $k$ goes to infinity according to Lemma \ref{emp}. 
\end{proof}

\begin{lemma}\label{sssup}
Let $\psi:X\rightarrow \Lambda^\mathbb{Z}$  be both a pointwise limit with respect to $d$ and a uniform limit with respect to $(d_N)_N$  of $(\psi_k)_k$.  Assume moreover the maps $(\psi_k)_k$ are 
 continuous equivariant. 
 
 Then any $\sigma$-invariant measure $\mu$ on $\overline{\psi(X)}$ is supported on $\psi(X)$, i.e. $\mu(\psi(X))=1
$.
\end{lemma}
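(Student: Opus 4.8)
The plan is to realize every ergodic $\sigma$-invariant measure on $Y=\overline{\psi(X)}$ as a pushforward of a $T$-invariant measure living on $X$, and then to invoke the elementary fact that a pushforward measure is carried by the image of the map. First I would reduce to the ergodic case: by the ergodic decomposition it suffices to prove $\mu(\psi(X))=1$ for $\mu$ ergodic, the general claim following by integration over the components, each of which is again a $\sigma$-invariant measure on the closed invariant set $Y$. I would also record at the outset that $\psi$, being a pointwise limit of the equivariant maps $\psi_k$, is itself Borel and equivariant, so that all three preceding lemmas apply, and that the induced map $\psi^*$ is precisely the pushforward $\nu\mapsto\nu\circ\psi^{-1}$.

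So fix $\mu$ ergodic on $Y$. By Birkhoff's ergodic theorem, $\mu$-almost every $y$ is generic, i.e. its empirical measures converge to $\mu$, so $\phi(y)=\{\mu\}$; pick one such $y$. By Lemma \ref{sat} we have $Y\subset\psi(X)^\infty$, hence there exists $x\in X$ with $d_\infty(y,\psi(x))=0$. The remark following Lemma \ref{emp} then yields $\phi(\psi(x))=\phi(y)=\{\mu\}$, and Lemma \ref{bnb} gives $\psi^*\phi(x)=\phi\circ\psi(x)=\{\mu\}$. Since $\phi(x)$ is a nonempty set of $T$-invariant measures (the accumulation points of the empirical measures of $x$), choosing any $\nu\in\phi(x)$ produces a $T$-invariant measure $\nu$ with $\psi^*\nu=\mu$.

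It then remains to conclude from $\mu=\psi^*\nu=\psi_*\nu$. The image $\psi(X)$ is the Borel image of the compact (hence analytic) space $X$, so it is analytic and in particular universally measurable, and $\mu(\psi(X))$ is well defined. Because $\psi^{-1}(\psi(X))=X$, we obtain
$$\mu(\psi(X))=\psi_*\nu(\psi(X))=\nu\bigl(\psi^{-1}(\psi(X))\bigr)=\nu(X)=1,$$
which is the desired conclusion.

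The main obstacle I anticipate is conceptual rather than computational. A $\mu$-generic point $y$ need \emph{not} itself lie in $\psi(X)$: Lemma \ref{sat} only places it at Besicovitch distance zero from a genuine image point $\psi(x)$, and $d_\infty$-equivalence does not preserve membership in $\psi(X)$. Hence one cannot establish $\mu(\psi(X))=1$ by arguing that generic points belong to $\psi(X)$; the essential step is instead to convert the information at the generic point into the exact identification $\mu=\psi_*\nu$ and then exploit that pushforwards concentrate on the image. Securing $\psi_*\nu=\mu$ for a \emph{specific} $\nu$ tied to $y$ (rather than merely $\mu\in\psi^*\mathcal{M}(X,T)$ for some unrelated measure) is exactly what the chain of Lemmas \ref{sat}, \ref{emp}, and \ref{bnb} provides.
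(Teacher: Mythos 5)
Your proof is correct and follows essentially the same route as the paper's: pick a Birkhoff-generic point $y$ for an ergodic $\mu$, use Lemma \ref{sat} to find $x\in X$ with $d_\infty(y,\psi(x))=0$, pass through the remark after Lemma \ref{emp} and Lemma \ref{bnb} to get $\psi^*\phi(x)=\{\mu\}$, and conclude because a pushforward measure is carried by $\psi(X)$. The only differences are that you make explicit two points the paper leaves implicit — the reduction to the ergodic case and the universal measurability of the analytic set $\psi(X)$ — both of which are welcome clarifications rather than deviations.
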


\begin{proof}
Let $\mu$ be a $\sigma$-invariant ergodic measure on $\overline{\psi(X)}$.  According to Birkhoff ergodic theorem there is $y\in \overline{\psi(X)}$ with $\phi(y)=\{\mu\}.$ By Lemma \ref{sat} one can find $x\in X$ such that $d_\infty(y,\psi(x))=0$. As previously discussed after Lemma \ref{emp} it implies that
$\phi\circ\psi(x)=\phi(y)=\mu$.  Finally by Lemma \ref{bnb} we have $\psi^*\phi(x)=\phi\circ \psi(x)=\mu$. This concludes the proof as   $\psi^*\phi(x)$ is supported on $\psi(X)$.
\end{proof}

\subsection{Proof of Propostion \ref{zer}}

We first deal with aperiodic systems. 
For general systems with periodic points the proof is a little more involved as we need to encode periodic points. In this last case we  will adapt the construction of Krieger \cite{Kri} (Theorem \ref{kkkk}) using the $per$-expansiveness property and the marker property given in  Lemma  \ref{mar}.\\

Observe first that in any case one only needs to embed our system in a full shift  with some finite alphabet. Indeed the topological entropy and the cardinality of periodic points with least period $n$ for any $n$ will be preserved by our Borel embedding so that by applying Krieger Theorem for topological expansive systems (Theorem \ref{kkkk})  we get after a composition an embedding in the shift space with the desired number of letters (but also in any subshift of finite type with the suitable lower bound on the topological entropy and cardinality of periodic points).

\subsubsection{Construction of the embedding, the aperiodic case}
We consider an aperiodic  asymptotically $h$-expansive zero-dimensional dynamical system  $(X,T)$. Let $K$ be an integer with $\log K > h_{top}(T)$. We let $\alpha>0$ be the difference $\alpha=\log K-h_{top}(T)$.  \\

\textit{0. Dynamical Markers.}
 We let $(\mathcal{V}_k)_k$ be a sequence of nested clopen partitions as in Lemma \ref{hex}. 
We let $n_1$ be such that for $n>n_1$: 
$$\sharp \mathcal{V}_1^{n}\leq e^{n(h_{top}(T)+\alpha/4)}< K^{n(1-\alpha/2)-1}$$
and for $k>1$ we let $n_k$ be such that  for $n\geq n_k/2$:
\begin{eqnarray*}
\sup_{U^n\in \mathcal{V}_{k}^{n}}\sharp\{ V^n, \ V^n\in \mathcal{V}_{k+1}^{n} \textrm{ with } V^n\subset  U^n\}& < & K^{n\alpha/2^k-1 }.
\end{eqnarray*}

We may also ensure that the sequence  $(n_k)_k$ satisfies $\alpha n_k>>2^k$ and $n_k>2n_{k-1}$ for all integers $k$.  We will build an injective map from $X$ to $\{|,+,\|,\circ,1,...,K\}^\mathbb{Z}$ which conjugates $T$ to the shift  as in the  ergodic generator Krieger theorem \cite{Kri} (Theorem \ref{kkk} of the Introduction) by first encoding the dynamic with respect to the covers $\mathcal{V}_1, \mathcal{V}_2$, ... ,$\mathcal{V}_k$,... on finite pieces of orbits of length of order  $n_1,n_2,...,n_k,...$. 
These pieces of orbits are given by the  topological Rokhlin  towers $(U_k)_k$ given by Lemma \ref{mar} with respect to the sequences $(n_k)_k$. More precisely, we will consider the following nonincreasing sequence  of $\mathbb{Z}$-subsets $(G_k(x))_k$ for any $x\in X$. We first let $G_{1}(x):=  \{ l\in \mathbb{Z}, \  f^lx\in U_1\}$. Then we define for any integer $k>1$ the subset $G_k(x)$ of $\mathbb{Z}$ by induction on $k$ as  the subset of  integers $i\in  G_{k-1}(x)$, such that if $j$ denotes the successor of $i$ in $G_{k-1}$ then there is $l\in [i,j[$ with $f^lx\in U_k$ (see Figure \ref{bl}). Observe that the distance between two consecutive integers in $G_k(x)$ has length larger than or equal to $n_k/2$ and less than $4n_k$.  We let $\underline{n}$ be the sequence of integers $(n_k)_k$ and $G(x):=(G_k(x))_k$ be the nonincreasing sequence of the subsets $G_k(x)$. \\

\begin{figure}[!ht]
\vspace{-0,5cm}
\includegraphics[scale=0.2]{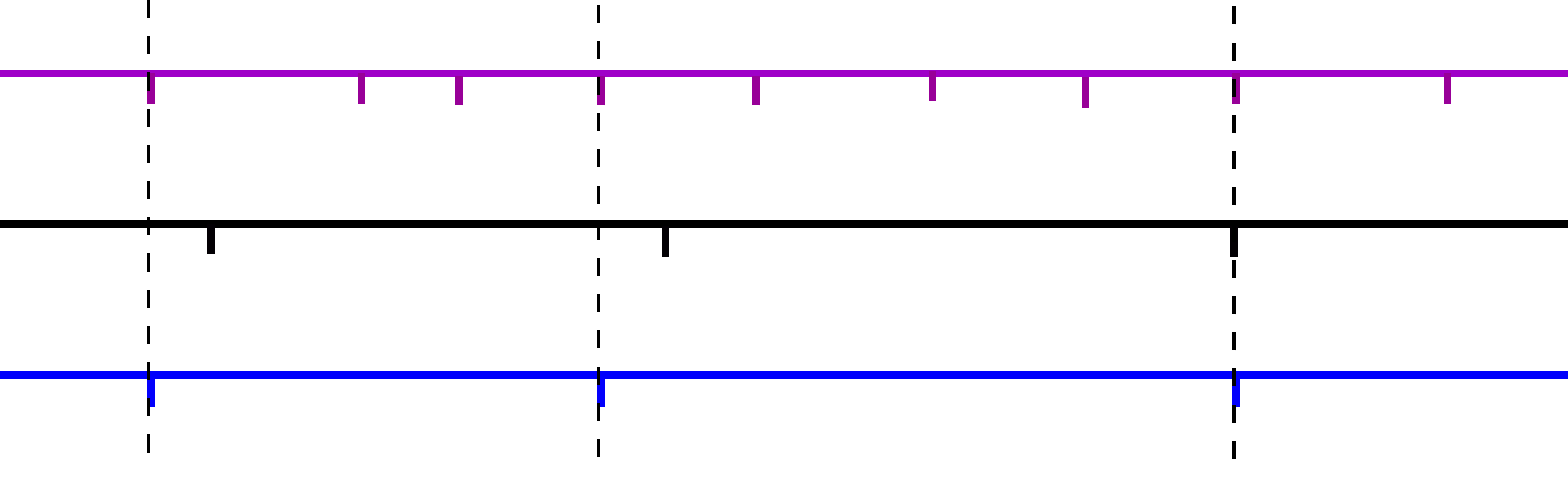}
\centering
\caption{\label{bl}\textbf{The set $G_k(x)$ from the return times in $U_k$.} The first line and dashes in purple represent the element in $G_{k}(x)$. On the second line we draw the 
times in $U_{k+1}$. The last line gives the associated times in $G_{k+1}(x)$.}
\end{figure}

\textit{1. Marker structure.} 
 We consider the sequence $G=G(x)$ for some fixed $x$. For simplicity we also denote by  $G_k$ the set $G_k(x)$.  For any integer $k>0$ an empty $k$-block  associated to $G$ will be  a finite word $u^k$ in $\{|, +,\|,*,\circ \}$ indexed with respect to $G_k$ : it starts and finishes at consecutive integers $i_k<j_k\in G_k$, that is $u^k:=(u_{i_k},...,u_{j_k-1})$. The length $j_k-i_k$ of such a block $u^k$ is denoted by $|u^k|$. The empty $k$-blocks  and their $k$-marker, $k$-filling, $k$-free positions are  then defined by induction  as follows. The exact meaning of the symbols in the alphabet will be given later on, but in order to help the reader we roughly explain it now:
\begin{list}{}{}
 \item \_ \textbf{$|$} and \textbf{$\|$}  are marking the times in $G_k$,
\item  \_ \textbf{$+$}  point out the lack of a marker $\|$,
\item   \_ \textbf{$*$} represent the filling positions, which are  the places where will be written the code of finite orbits,
\item \_ \textbf{$\circ$} are called the free positions. They correspond to the remaining positions : we still did not assign them any value at a given  step of the construction. 
 \end{list}

 Let us be more precise now.  Firstly a empty $1$-block is a  word $u^1$  such that:

\begin{itemize}
\item the  first coordinate of $u^1$, the $1$-marker position, coincides with $|$,
\item the next  $[(1-\alpha/2)|u^1|]$ coordinates\footnote{$[x]$ denotes the integer part of a real number $x$} of $u^1$, called the $1$-filling positions,  all take the value $*$,
\item the remaining coordinates, called the $1$-free positions,  all take the value $\circ$. The first free position in $u_1$ is called the  $2$-marker position of $u_1$.
\end{itemize}

For $k>1$ an empty $k$-block $u^k$ is obtained from a concatenation of consecutive empty $(k-1)$-blocks, where we change  $\circ$ to $\|$  at the $k$-marker  position of the first $(k-1)$-block. This position in $u^k$ defines the $k$-marker position. Moreover we change  the $k$-marker positions  of the others $(k-1)$-blocks in $u^k$ to $+$. 
These positions  allow to detect the first coordinate $i_k\in G_k$ of the $k$-block $u^k$. The $[\alpha  |u^k|/2^k]$ next $(k-1)$-free positions are the $k$-filling positions of $u^k$ and the remaining $(k-1)$-free positions of the concatenation define the $k$-free positions of $u^k$ (see Figure \ref{ffi} below). The  $(k+1)$-marker position of $u^k$ is the first $k$-free position in $u^k$.\\
\begin{figure}[!h]
\vspace{-1,5cm}
\includegraphics[scale=0.22]{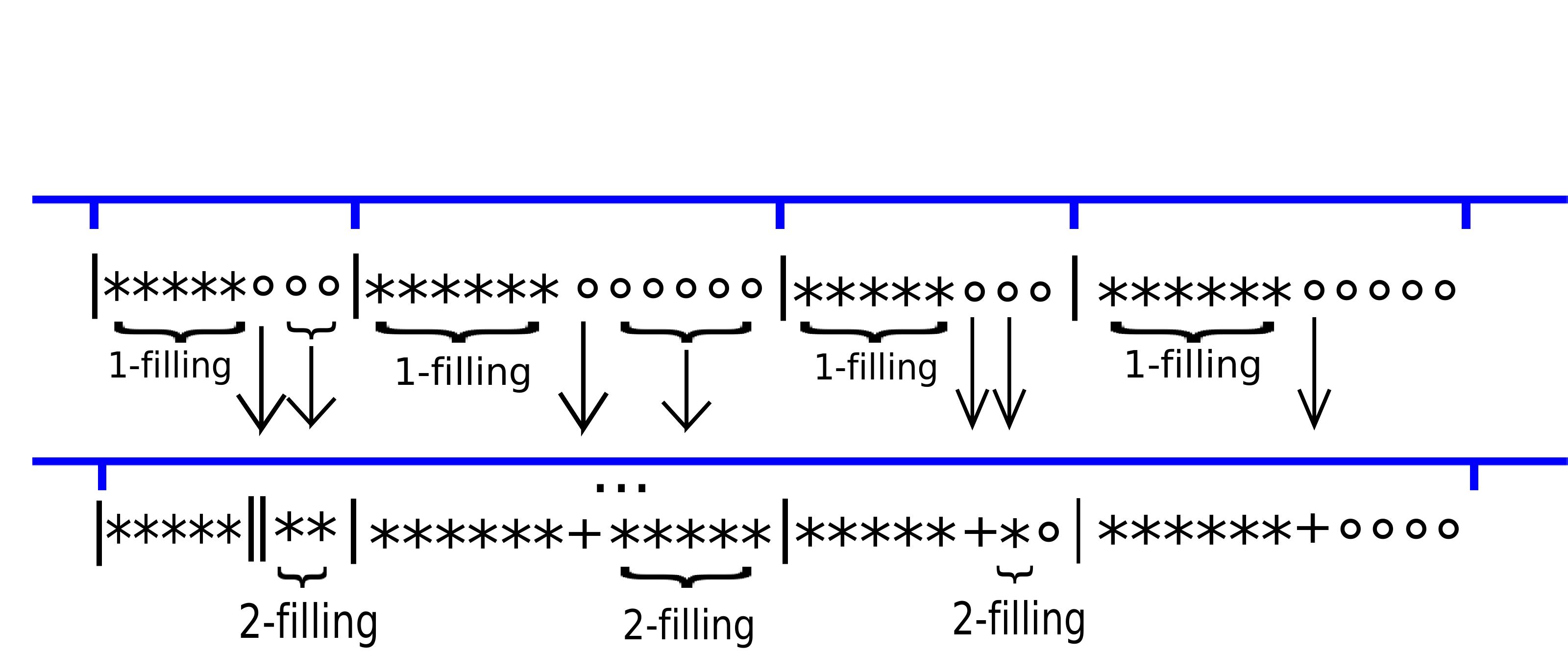}
\centering
\caption{\label{ffi}\textbf{Empty $1$- and $2$-blocks.} For $k=1,2$ the empty $k$-blocks and their  marker, filling and free positions are represented on the $k^{th}$ blue line. Elements in $G_k$ are given by the blue dashes.}
\end{figure} 

The empty $k$-code $\psi^\emptyset_k$ with respect 
to $G$ is then just the infinite word in $\{|,+, \|,*,\circ \}^\mathbb{Z}$ obtained by concatenation of  all empty $k$-blocks. \\

\textit{2. First scale encoding.}  let $\tau_1$ be the first return time of $y$ in  $U_1$,  that is 
$\tau_1(y):=\inf\{n>0, \ T^ny\in U_1\}$. For any $n\geq n_1$ we consider an injective map  from $\mathcal{V}_1^n$ to the set of words of length $[(1-\alpha/2)n]$ with letters in $\{1,...,K\}$. Then to any $y\in U_1$ we associate such a  word corresponding to $V_1^{\tau_1(y)}(y)$. The $1$-code $\psi_1(x)$ of $x$ is then obtained by replacing 
 in the empty $1$-blocks in  $\psi_1^\emptyset(x)$ associated to  $G(x)$ the symbols $*$'s in the $1$-filling positions by these words for $y=T^lx$ with $l$ being the  return times in $U_1$, which corresponds to the first index of the empty $1$-blocks. \\

\textit{3. Higher scales encoding.} We will now encode the dynamics with respect to $\mathcal{V}_2$ conditionally to $\mathcal{V}_1$.  For any $n\geq n_2/2$ and for any $V_1^n\in \mathcal{V}_1^n$ we consider an injective map  from $\{V_2^n\in \mathcal{V}_2^n, \ V_2^n\subset V_1^n\}$ to the $[\alpha n/4]$-words of $\{1,...,K\}^\mathbb{Z}$. Then in an empty $2$-block of size $n$  of $\psi_2^\emptyset(x)$ we replace the symbols   $*$'s  at the $2$-filling positions by the  $[\alpha n/4]$-words of $\{1,...,K\}^\mathbb{Z}$ associated to $V_2^{n}(y)$ conditionally to $V_1^{n}(y)$ for $y=T^lx$ with $l$ the first index of the empty $2$-blocks. In a similar way we build $\psi_k$ for any $k>2$. The sequence $(\psi_k(x))_k$ is converging pointwisely for the product topology in $\{1,...,K\}^\mathbb{Z}$ as for any $i$ the $i^{th}$ coordinate of $\psi_k(x)$ is constant after some rank. We let $\psi(x)$ be the pointwise limit of $\psi_k(x)$. \\

\textit{4. Decoding.}   
The  $k$-code $\psi_k(x)$ of $x$ may be deduced  from 
$\psi(x)$ by replacing the $l$-marker and $l$-filling positions in $\psi(x)$ for $l>k$ by $k$-free positions. Thus we get in this way  a sequence of continuous maps $\pi_k:\psi(X)\rightarrow \{1,...,K\}^\mathbb{Z}$  such that $\pi_k\circ \psi(x)=\psi_k(x)$. We can then identify $\mathcal{V}_1^{n}(x)$ for any integer $n$ by reading the  words in the $1$-filling positions of $\psi_1(x)$  and finally $\mathcal{V}_l^{n}(x)$ inductively on $l$  by reading in $\psi_l(x)$ the words in the $l$-filling positions. Consequently  any fiber of $\psi_k$ is contained in a unique element of $\mathcal{V}_k^\infty=\{\bigcap_{n\in \mathbb{Z}}T^{-n}V_k^n, \ V_k^n\in \mathcal{V}_k \}$. In particular $\psi_k$ is $\epsilon_k$-injective with $\epsilon_k$ being the diameter of $\mathcal{V}_k$. As the diameter of $\mathcal{V}_k$ goes to zero as $k$ goes to infinity it  follows that the limit $\psi$ is injective. \\

\textit{5. Continuity and convergence  of $(\psi_k)_k$.} The maps $(\psi_k)_k$ are continuous because $(\mathcal{V}_k)_k$ are open covers and $U_k$ are clopen sets. Moreover $\psi_k(x) $ and $\psi(x)$  differs on any $k$-block only at the $k$-free positions.   Recall that  these positions represent a proportion of at most $\alpha/2^k$ positions in $k$-blocks. Moreover for any $n>n_k^2$ the segment $[-n,n]$ is the union of at most $2(2n+1)/n_k$  $k$-blocks and $2$ $k$-subblocks (whose length is less than or equal to $4n_k$). 
Then we have
$$\sharp \{i,\ |i|\leq n \text{ and }  \ (\psi_k)^i(x)\neq (\psi)^i(x)\} \leq (2n+1)\alpha/2^k +2(2n+1)/n_k+8n_k.$$
  
Thus we conclude that  $d_N\left(\{i, \ (\psi_l)^i(x)\neq (\psi)^i(x) \} \right)\lesssim \alpha/2^k$  for any $x\in X$, any $N>n_k^2$ and any $l\geq k$  because we took $\alpha n_k>>2^k$ and $\{i, \ (\psi_l)^i(x)\neq (\psi)^i(x) \}\subset \{i, \ (\psi_k)^i(x)\neq (\psi)^i(x) \}$. Therefore $\psi_k$ goes uniformly to $\psi$ w.r.t. $(d_N)_N$ when $k$ goes to infinity. \\

\textit{6.  Principal strongly faithful symbolic extension as  the inverse of Krieger embedding.}
The inverse of $\psi$ is (uniformly) continuous on $\psi(X)$. Indeed if $\psi(x)$ and $\psi(y)$ coincide on their $[-4n_k,4n_k]$  coordinates then  they belong to the same $l$-block for any $l\leq k$ and in particular $x$ and $y$ belong to the same element of $\mathcal{V}_k$. 

Lemma \ref{sssup} applies to $\psi$, thus any $T$-invariant measure on $\overline{\psi(X)}$ is supported on $\psi(X)$.
 Finally as $\psi:(X,T)\rightarrow (\psi(X),\sigma)$ is a Borel isomorphism, the induced map on the set of invariant measures is bijective and preserve the measure theoretical entropy. This proves $\pi$ is  faithful and principal.

\begin{rem}
Observe that $\psi(X)$ is in general not closed. Indeed let $(X,T)$ be the odometer to base $(p_k)_k=(2^k)_k$. For any positive integer $n$ we let  $x_n$ be the point in $X$ given by $x_n=(...,0,...0,2^n,0,...)$ where $2^n$ is at the $(n+1)^{th}$ coordinate and  $U_n$ be the clopen set of points whose $n$ first coordinates are zero.  The sequence $(U_n)_n$ satisfies the properties of the Marker Lemma (Lemma \ref{mar}). Then we consider the Borel embedding $\psi: X\rightarrow \{0,1\}^\mathbb{Z}$ given by the previous construction.  Let $y$ be a limit point of $(\psi(x_n))$. If it belongs to $\psi(X)$ the point $y$ is  necessarily $\psi(0)$ as $\pi_k(y)=\lim_n\pi_k\circ \psi(x_n)=\lim_n\psi_k(x_n)=\psi_k(0)$. Now the $-1$ position of $\psi(0)$ is a $n+1$-free position for all $n$ whereas for any $n$ the $-1$ position of $\psi(x_n)$ is a $n+1$-filling position (indeed the zero$^{th}$ coordinate  is in the middle of an $n+1$-block of $\psi(x_n)$ and at the beginning of an $n+1$ block of $\psi(0))$.
\end{rem}

\subsubsection{Construction of the embedding, the periodic case}
We consider now the general case, i.e. we consider asymptotically expansive zero-dimensional systems not necessarily aperiodic ones. Here we will distinguish among $k$-blocks regular ones and singular ones, which encode respectively the dynamics far from or close to the periodic points. 
Each singular block is the repetition of a brick, which is a finite word encoding the periodic point associated to the singular block (its length is in particular equal to the period). We let again $\alpha=\log K-h_{top}(T)>0$.\\

\textit{0. Dynamical Markers.} We consider a sequence of clopen partitions $(\mathcal{V}_k)_k$ given by Lemma \ref{hex} and we let  $\underline{n}=(n_k)_k$  be a sequence of integers satisfying for any $k$ and for all $n>n_k/2$   

$$\sup_{U^n\in \mathcal{V}_{k}^{n}}\sharp \{V^n, \ V^n\in \mathcal{V}_{k+1}^{n} \textrm{ with } V^n\subset  U^n\} \leq  e^{n\alpha/2^k }$$

 and  moreover

\begin{eqnarray}\label{peexp}\sup_{V_k^{n}\in \mathcal{V}_k^{n}}\log\{\sharp Per_{n}(X,T)\cap V_k^{n}\}& <&\alpha/2^k.
\end{eqnarray}

We also choose $n_1$ large enough so that for $n\geq n_1$ we have $\sharp Per_{n}(X,T)<K^{n-1}$  and $\sharp \mathcal{V}_1^{n}<K^{n(1-\alpha/2)-1}$. Moreover we may  ensure that $(n_k)_k$ satisfy  $\alpha n_k>>2^k$ and $n_k>16n_{k-1}$ for any $k\in \mathbb{N}$ (we let $n_0=0$).  Far from the  $n_1,n_2,...,n_k,...$-periodic points we will encode the dynamics with respect to the covers $\mathcal{V}_1, \mathcal{V}_2$, ... ,$\mathcal{V}_k$,... on finite pieces of orbits of length of order $n_1,n_2,...,n_k,...$. These pieces of orbits are given again by the  topological Rokhlin  towers over $(U_k)_k$ given in Lemma \ref{mar} with respect to the sequences $\underline{n}=(n_k)_k$ and $(\epsilon_k)_k$. This last  sequence will be precised latter on.
But we already  assume $\epsilon_k$ so small that any consecutive return times  in $U_k$ with distance larger than or equal to  $2n_k$ corresponds to the visit of a $\epsilon_k$-neighborhood of a single periodic orbit. We may also assume that three consecutive return times in $U_k$, which are at least  $2n_k$-far from each other, are associated to the same periodic orbit. 

For any periodic orbit with least period  $n\leq n_1$ we choose arbitrarily one point $x$ in the orbit. For such a periodic point we let $G_1(f^lx)=F_1(f^lx)=l+n\mathbb{Z}$ and $E_1(x)=\emptyset$ for any integer $l$. Let us now define $G_1(x),F_1(x),E_1(x)$ for aperiodic or $n$-periodic points $x$ with $n>n_1$. 
Let $l$ be a return time of $x$ in $U_1$. We distinguish three cases: 
\begin{itemize}
\item there are other return times  in $U_1$, say  $k,m$,  with $l-4n_1<k<l<m<l+4n_1$. We let in this case $l'=l$,
\item there is a return time $k$ in $U_1$   with $l-4n_1<k<l$ (resp. $l<k<l+4n_1$), but no return times $p$ with   $l<p<l+4n_1$ (resp. $l-4n_1<p<l$), 
in this case  $T^{l+n_1}x,T^{l+n_1+1}x,..., T^{l+2n_1}x$ (resp. $T^{l-2n_1}x,T^{l-2n_1+1}x,...,T^{l-n_1}x$) belong to the $\epsilon_1$-neighborhood of a single periodic orbit with period $m$ less than or equal to $n_1$.  Then we let $l'$ be the least  integer   in $[l,l+n_1[$ (resp. largest integer in $]l-n_1,l]$) such that $T^{l'+cm}x$ is $\epsilon_1$-close to the distinguished  point in this periodic orbit where $c$ is a positive (resp. negative) integer with $l+n_1\leq l'+cm\leq l+2n_1$ (resp. $l-2n_1\leq l'+cm\leq l-n_1$).
\item there is no return time $k$ in $U_1$ with $|k-l|< 4n_1$.  
\end{itemize}

The set $E_1(x)$ is then defined as the set of integers $l'$ as defined above for return times $l$ in $U_1$ of the two first kinds. Observe two consecutive integers in $E_1(x)$ are at least $n_1$-far from each other. 
Now if $(k',l')$ are two consecutive integers in $E_1(x)$ with $l'-k'>4n_1$ then $l'$ and $k'$ are of the second kind and  $l'-k'\in m\mathbb{Z}$ where $m$ is the period of the single periodic orbit close to the piece of orbit $f^{k'}(x),...,f^{l'}(x)$. Then we let $F_1(x)$ be the union of $[k',l'[\cap \left(k'+m\mathbb{Z}\right)$ over all such pairs $(k',l')$  (see Figure \ref{zoo}). Thus two consecutive integers in the union $G_1(x)=E_1(x)\cup F_1(x)$ are at most $4n_1$-far from each other. As the set $U_1$ is clopen the maps $x\mapsto E_1(x), F_1(x),G_1(x)$ are continous in the following sense : for any bounded interval of integers $I\subset \mathbb{Z}$  and for all $x\in X$ the sets $E_1(y)\cap I, F_1(y)\cap I$ and $G_1(y)\cap I$ are constant for $y$ in  a neighborhood of $x$.  

\begin{figure}[!ht]
\vspace{-0.7cm}
\hspace{-2,1cm}
\includegraphics[scale=0.21]{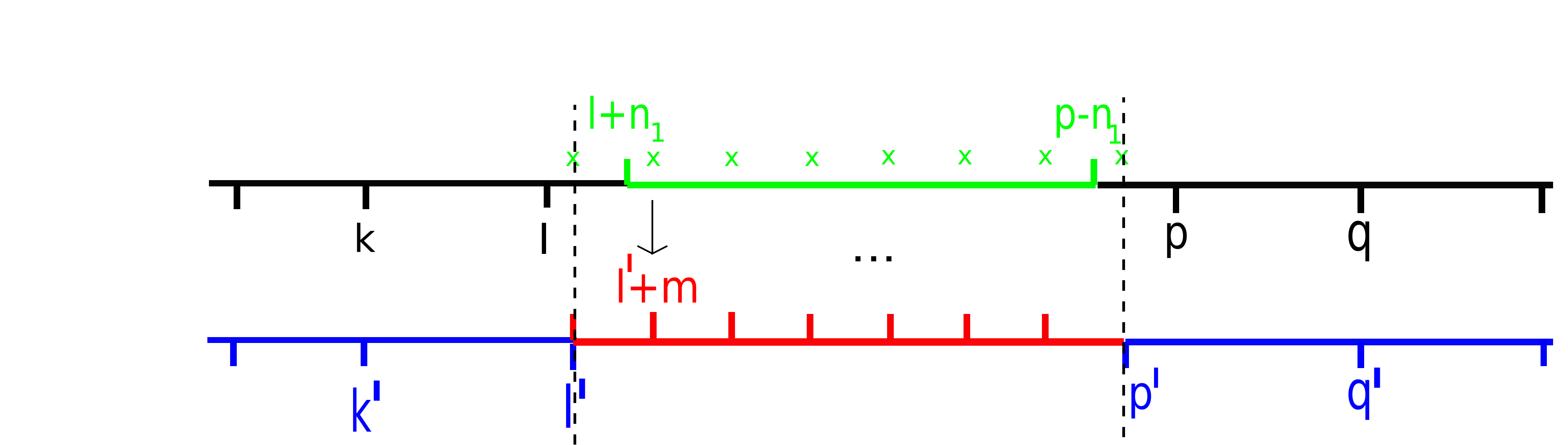}
\captionsetup{width=0.8\textwidth}
\caption{\label{zoo}\textbf{The set $G_1(x)$ from the return times in $U_1$.}   On the first line the part in black corresponds to times in $\bigcup_{|l|<n_1}T^{l}U_1$ whereas the  green part are times in the $\epsilon_1$-neighborhood of $Per_{n_1}$. Black dashes are times in $U_1$ and the green crosses correspond to the visits of the $\epsilon_1$-neighborhood of the distinguished point of the periodic orbit. On the second line the elements of $E_1(x)$ and $F_1(x)$ are respectively represented by blue and red dashes.}
\end{figure}

The set $E_k(x),F_k(x)$ are then defined by induction on $k>1$ in a similar way as follows. We assume the sets $E_{k-1}(x),F_{k-1}(x)$ already built  and they satisfy : 
\begin{itemize} 
\item two consecutive integers in $G_{k-1}(x)=E_{k-1}(x)\cup F_{k-1}(x)$ are at most $5n_{k-1}$-far from each other,
\item the maps $x\mapsto E_{k-1}(x),F_{k-1}(x)$ are continuous.
\end{itemize}

Let us define now $E_k(x)$ and $F_k(x)$. Firstly for a $n$-periodic point $x$ with $n\leq n_k$ either $n\leq n_{k-1}$ and we let $G_k(x)=F_k(x)=F_{k-1}(x)$, $E_k(x)=\emptyset$  or $n_{k-1}<n\leq n_k$. 
  In this last case we choose a point, say $f^l(x)$, in the orbit of $x$ with $l\in G_{k-1}(x)$ and finally we let $G_k(x)=F_k(x)=l+n\mathbb{Z}$ and $E_k(x)=\emptyset$. 
 
 For other points we classify return times in $U_k$ and define $E_k(x)$ and $F_k(x)$ in a similar way as for $k=1$. Firstly observe that we can take $\epsilon_k$ so small that for any $x\in Per_{n_k}^{\epsilon_k}$ the set $G_{k-1}(x)$ coincides with the set $G_{k-1}(y)=E_{k_1}(y)$ on $[-n_k,n_k]$ where $y$ is the periodic point which is $\epsilon_k$-closed to $x$. We consider $l$ be a return time of $x$ in $U_k$. We distinguish three cases: 
\begin{itemize}
\item there are other return times  in $U_k$, say  $j,m$,  with $l-4n_k<j<l<m<l+4n_k$. We let in this case $l'$ be the largest integer in $G_{k-1}(x)$ less than $l$ (by the induction hypothesis we have $|l'-l|<5n_{k-1}$),
\item there is a return time $j$ in $U_k$   with $l-4n_k<j<l$ (resp. $l<j<l+4n_k$), but no return time $p$ with   $l<p<l+4n_k$ (resp. $l-4n_k<p<l$), 
in this case  $T^{l+n_k}x,T^{l+n_k+1}x,..., T^{l+2n_k}x$ (resp. $T^{l-2n_k}x,T^{l-2n_k+1}x,...,T^{l-n_k}x$) belong to the $\epsilon_k$-neighborhood of a single periodic orbit with period $m$ less than or equal to $n_k$.  Then we let $l'$ be the least  integer in $G_{k-1}(x)$  larger  (resp. largest integer less) than $l$ such that $T^{l'+cm}x$ is $\epsilon_k$-close to the distinguished  point in this periodic orbit where $c$ is a positive (resp. negative) integer with $l+n_k\leq l'+cm\leq l+2n_k$ (resp. $l-2n_k\leq l'+cm\leq l-n_k$).
\item there are no return times $j$ in $U_k$ with $|j-l|< 4n_k$.  
\end{itemize}

The set $E_k(x)$ is then defined as the set of integers $l'$ as defined above for return times $l$ in $U_k$ of the two first kinds. Observe two consecutive integers in $E_k(x)$ are at least $n_k/2$-far from each other. 
Now if $(k',l')$ are two consecutive integers in $E_k(x)$ with $l'-k'>4n_k$ then $l'$ and $k'$ are of the second kind and  $l'-k'\in m\mathbb{Z}$ where $m$ is the period of the single periodic orbit close to the piece of orbit $f^{k'}(x),...,f^{l'}(x)$. Then we let $F_k(x)$ be the union of $[k',l'[\cap \left(k'+m\mathbb{Z}\right)$ over all such pairs $(k',l')$  (see Figure \ref{zoe}). 

\begin{figure}[!h]
\hspace{-2,1cm}
\includegraphics[scale=0.21]{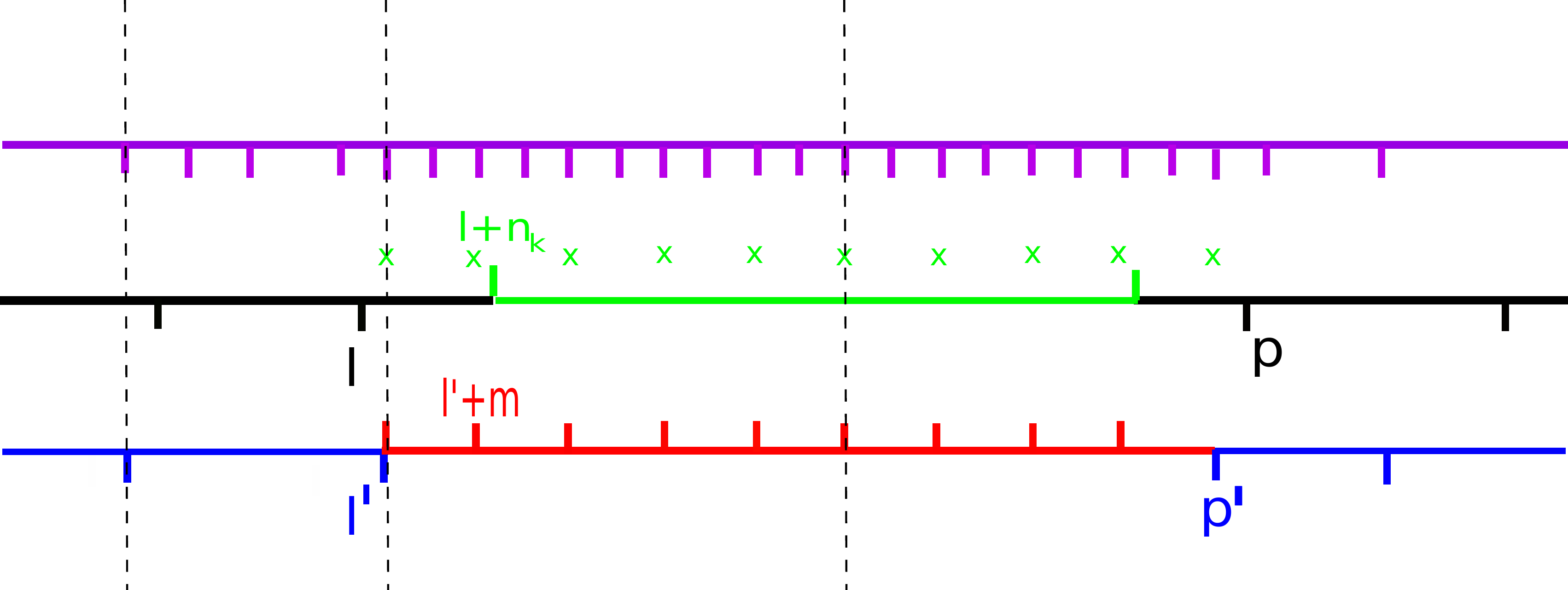}
\captionsetup{width=0.8\textwidth}
\caption{\label{zoe}\textbf{The set $G_k(x)$ from $G_{k-1}(x)$ and the return times in $U_k$.}   On the first line in purple is represented the times in $G_{k-1}(x)$. The second line corresponds, as the first line in Figure \ref{zoo}, to the return times in $U_k$ (in black) and in the $\epsilon_k$-neighborhood of  $Per_{n_k}$ (in green).  On the last line we draw the elements of $E_k(x)$ and $F_k(x)$ with  respectively  blue and red dashes.}
\end{figure}
Thus two consecutive integers in the union $G_k(x)=E_k(x)\cup F_k(x)$ are at most $5n_k$-far from each other.  The maps $x\mapsto E_k(x), F_k(x)$ are continuous. Moreover it follows  from the construction that  $G_{k}(x)\subset G_{k-1}(x)$ for all integers $k$ and for all $x\in X$. \\

\textit{1. Marker structure.}
We consider the nonincreasing sequence $G=(G_k)_k$  of subsets of $\mathbb{Z}$ associated to a given point $x\in X$. Recall that for any $k$, the set $G_k$ is the union of the two  subsets $E_k$ and $F_k$ with the following properties. Moreover any two consecutive  $p<q$ in $E_k\cup\{-\infty,+\infty\}$ satisfy :
\begin{itemize}
\item either $n_k/2\leq q-p\leq 4n_k$, then there are no element of $F_k$ in the interval $[p,q[$. Any word over the interval of integers $[p,q[$ will be called a regular $k$-block (see Figure \ref{f} below).
\item  or $q-p>4n_k$, then there exists an integer $0<m\leq n_k$ such that $q-p\in m\mathbb{Z}\cup \{+\infty\}$ and $F_k\cap [p,q[=\left(p+m\mathbb{Z}\right)\cap [p,q[$ (resp. $F_k\cap [p,q[=\left(q+m\mathbb{Z}\right)\cap [p,q[$, resp. $F_k=n+m\mathbb{Z}$ for some integer $n$) when $p$ is finite (resp. $p=-\infty$ and $q$ is finite, resp. $p=-\infty$ and $q=+\infty$). Any word over the interval of integers $[p,q[$ will be called a singular $k$-block, whereas words over intervals of the form $[p+km, p+(k+1)m[$ with $k\in \mathbb{N}$ and $p+(k+1)m\leq q$ (resp. $[q+(k-1)m, q+km[$ with $-k\in \mathbb{N}$, 
resp. $[n+km,n+(k+1)m[$ with $k\in \mathbb{Z}$) are said to be $l$-bricks with $n_{l-1}<m\leq n_l$. 
\end{itemize}

\begin{figure}[!ht]
\includegraphics[scale=0.22]{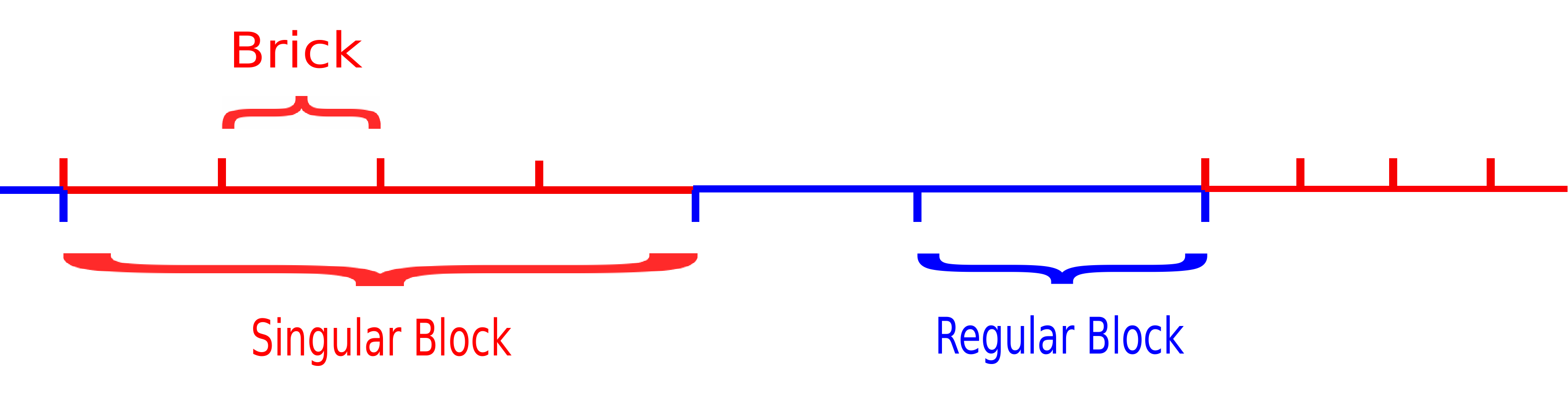}
\captionsetup{width=0.8\textwidth}
\centering
\caption{\label{f}\textbf{$k$-blocks and $l$-bricks with $l\leq k$ for a given $k$.}   
Elements in $F_k$ are given by red dashes and elements in $E_k$ by blue ones. Regular $k$-blocks (resp. bricks) are the blue intervals (resp. red) between two consecutive blues (resp. red) dashes. A singular $k$-block is a maximal red interval : it is the self concatenation of a single $l$-brick with $l\leq k$.}
\end{figure}

The empty  $k$-blocks and $k$-bricks are words in the alphabet $\left\{*,\times,\circ,],[,+,(,),|, \times,\times|,\times ],\times [\right\}$ over $k$-blocks and $k$-bricks respectively. They    are  defined  by induction on $k$, together with  their marker, filling, free positions,  as follows. \\

\underline{Empty $1$-blocks.} Firstly a empty regular $1$-block is a  regular $1$-block $u^1$  such that  \begin{itemize}\item the  first and last letter of $u^1$, resp. called  the left and right $1$-marker positions, coincide respectively with $($ and $)$,
\item the next  $[(1-\alpha)|u_1|]$ coordinates of $u^1$, called the $1$-filling positions  all take the   value $*$,
\item the remaining coordinates, called the $1$-free positions,  all take the value $\circ$.
\end{itemize}

The $2$-marker position of a $1$-regular block is the first $1$-free position of this block. An empty $1$-brick is a $1$-brick  whose first letter   $\times$ represents the marker position and  other ones $*$ are $1$-filling positions.  \\

\begin{figure}[!h]
\hspace{-1,1cm}
\includegraphics[scale=0.18]{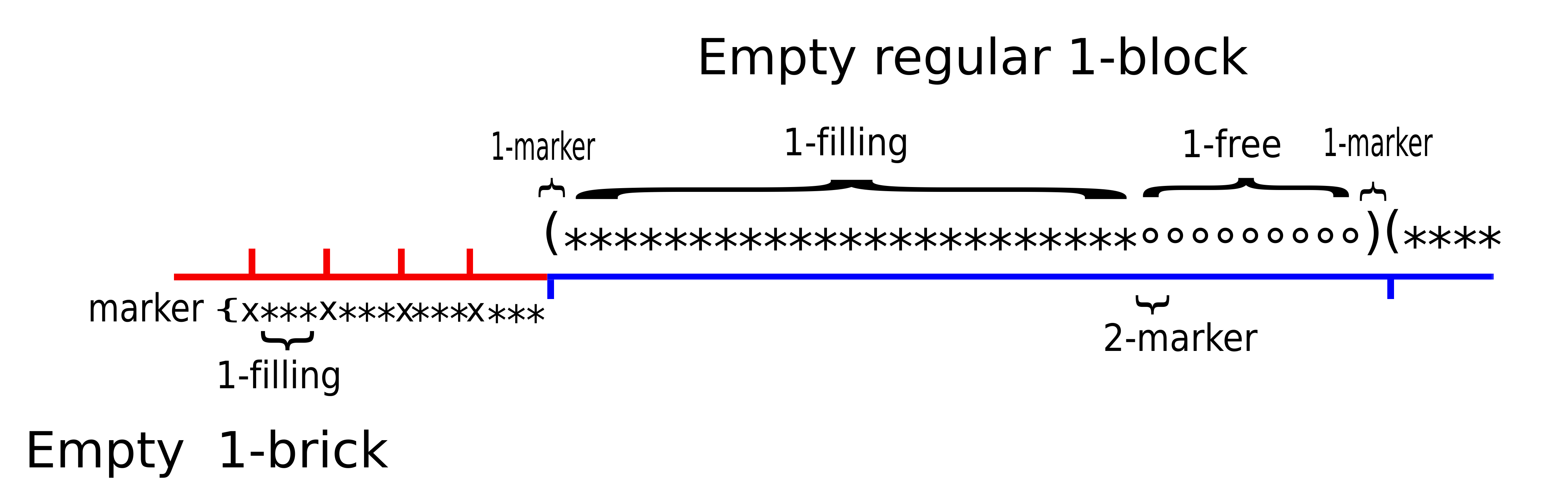}
\captionsetup{width=0.8\textwidth}
\center
\caption{ \textbf{Empty $1$-blocks.} The $2$-marker, $1$-marker,  $1$-filling and $1$-free positions of an empty regular $1$-block and  the marker and $1$-filling positions of  $1$-bricks are represented.}
\end{figure}

\underline{Empty $k$- blocks from empty $k-1$-blocks.}  Empty   $k$-block are obtained from a concatenation of  empty $(k-1)$-blocks  and $l$-bricks with $l\leq k-1$ after the following modifications  which will ensure that all empty regular $k$-blocks  and $k$-bricks (with $k\neq 1$) have a proportion $\alpha/2^{k+1}$ of $k$-free positions.

Let $u^k:=(u_{i_k},...,u_{j_k-1})$ be a concatenation of empty $(k-1)$-blocks  and $l$-bricks with $l\leq k-1$ in such a way  $u^k$ defines a regular $k$-block. Either $i_k$ belongs to $E_{k-1}$, which means $u^k$ starts with a empty regular $(k-1)$-block,   and  then we  change to $[$  the  value at the $k$-marker coordinate  of this $(k-1)$-block or $i_k$ is in $F_{k-1}$, which means $u^k$ starts with a empty $l$-brick with $l\leq k-1$, and then we change to $[$ (or $\times [$ when $l=1$) the value at the marker position of this brick.  We also modify the $k$-marker position of the last $(k-1)$-regular block or $l$-brick of our $k$-block $u^k$ by letting  the symbol  $]$ (or  $\times ]$ when $l=1$).  The marker positions of the remaining $(k-1)$-regular blocks and $l$-bricks with $l\leq k$ in $u^k$ are changed to $+$.

 \begin{figure}[!ht]
\vspace{-0,3cm}
\center
\includegraphics[scale=0.2]{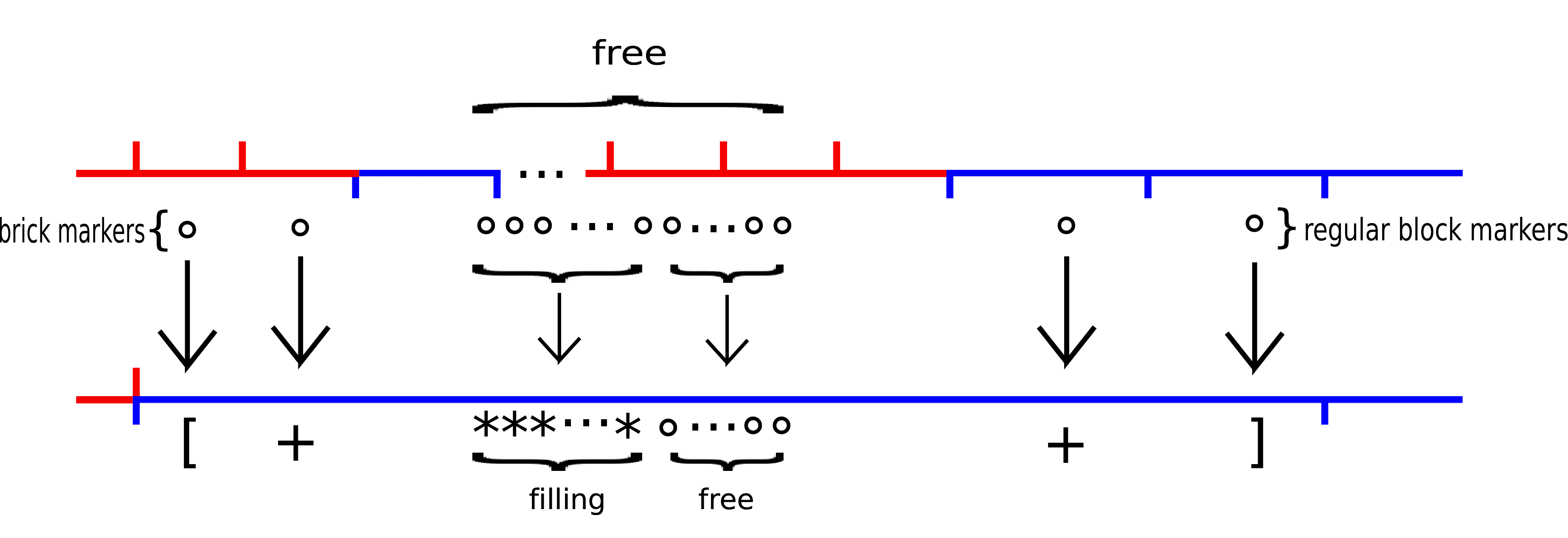}
\caption{\textbf{$(k+1)$-regular block from $k$-blocks.} A regular $(k+1)$-block is obtained from a concatenation of regular $k$-blocks and $l$-bricks with $l\leq  k$.}
\end{figure}

 We finally modify the values in the empty maximal  singular $1$-subblocks $v^k$ (i.e. maximal repetition of a $1$-brick) in $u^k$ as follows. In such a subblock $v^k$, we make $k$-free the  $l^{th}$ positions with $l> \max \left( [|v^k|(1-\alpha/2^k)] , m_1+1\right)$, where $m_1$ is the length of  the repeated $1$-bricks in $v^k$, by letting the value $\circ $ at these coordinates, with one exception : we only let the potential symbol $]$ at the marker position of the last $1$-brick of $v_k$ (it appears when $v^k$ is the end of $u^k$, see Figure \ref{dec}.).\\
 
 \begin{figure}[!h]
 \hspace*{-1,6cm}
\includegraphics[scale=0.22]{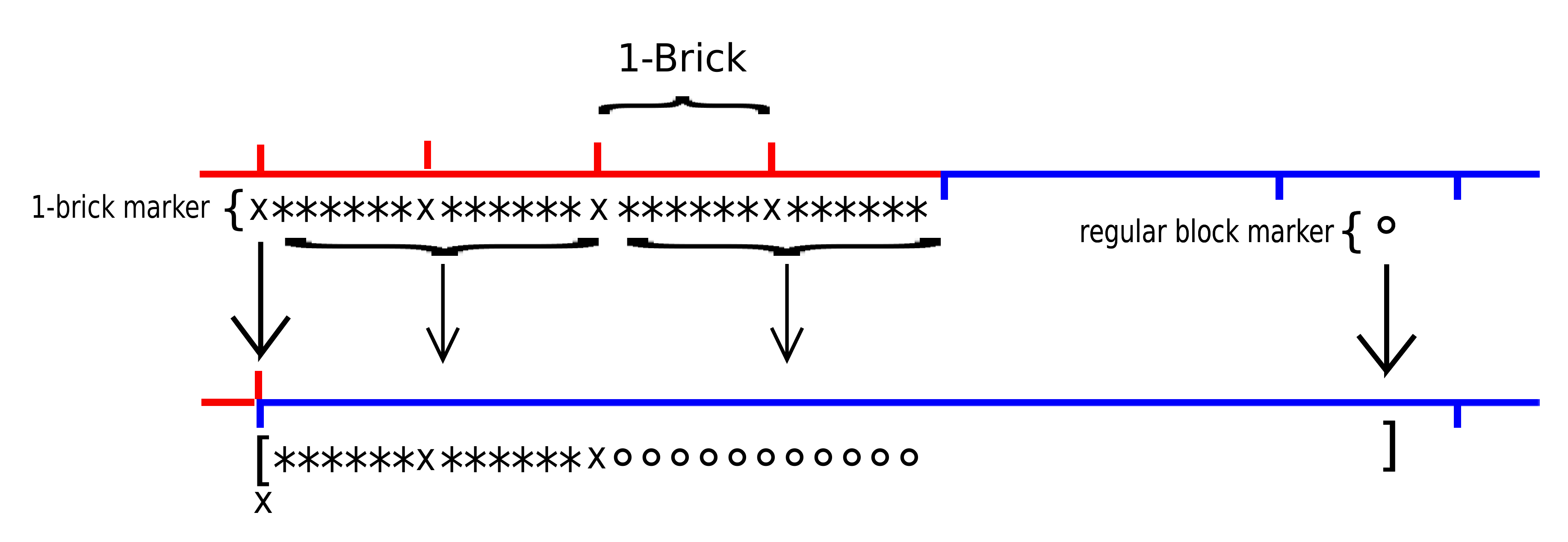} 
\caption {\label{fddf}\textbf{$1$-brick in a regular $(k+1)$-block (or $(k+1)$-brick).} We illustrate the particular case of $1$-bricks in a $k$-singular block which take part in a $(k+1)$-regular block (or a $(k+1)$-brick). Some of the $1$-filling positions of these $1$-bricks are deleted. Such a change occurs only in this situation.}
\end{figure}

Assume now that $u^k:=(u_{i_k},...,u_{j_k-1})$ defines a $l$-brick with $l\leq k$. When $l\leq k-1$ we let the brick remain  unchanged.   Then an empty $k$-brick is a concatenation of  $(k-1)$-regular blocks  or $l$-bricks with $l<k$ (see Figure \ref{gu}). 

 \begin{figure}[!b]
 \vspace*{-1,4cm}
\hspace*{-0,2cm}
\includegraphics[scale=0.26]{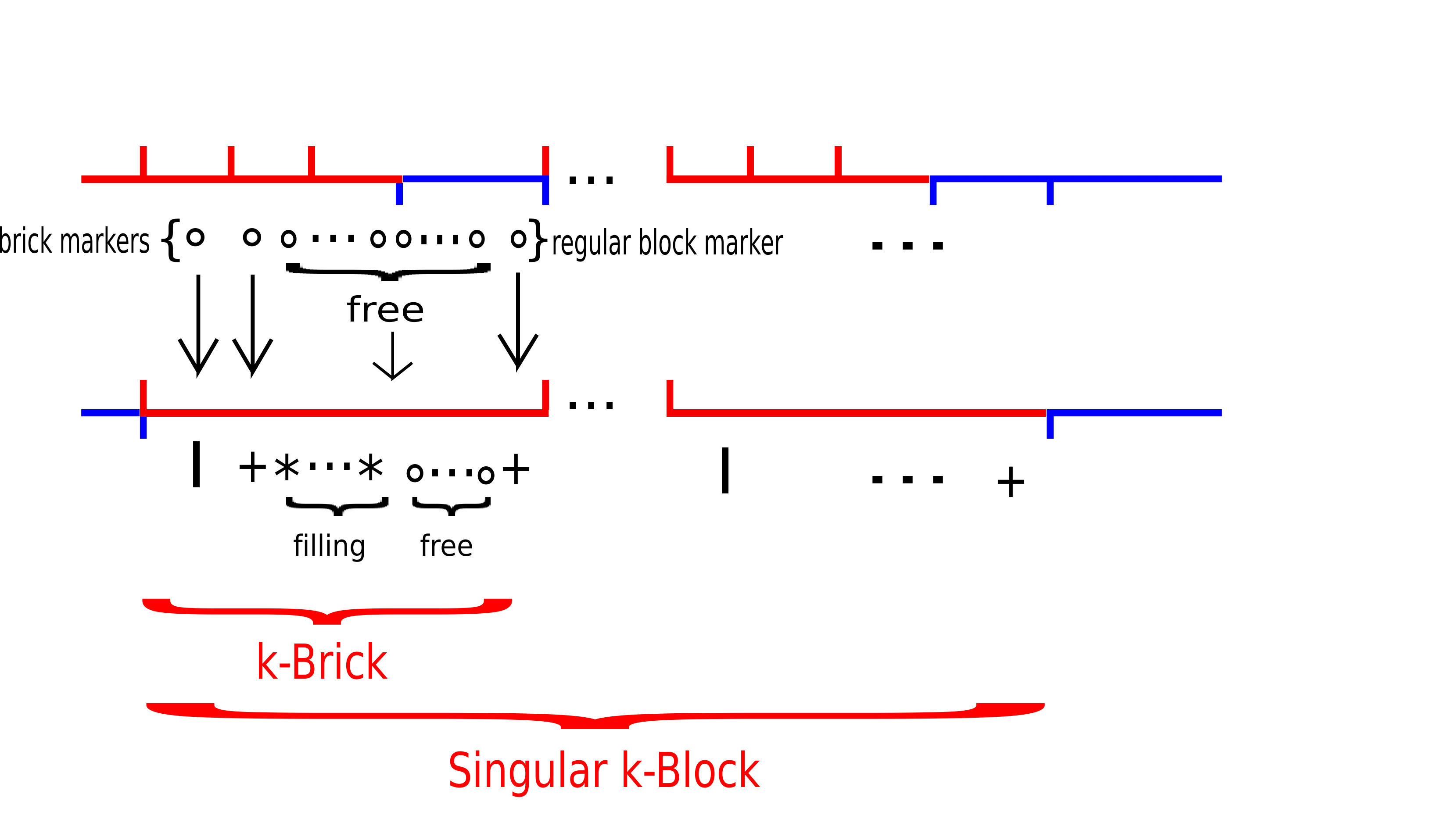}
\caption{\label{gu}\textbf{$k$-brick from $(k-1)$-blocks.} In the first line, we can see the $(k-1)$-blocks. The second line represents the  $k$-singular block given by the repetition of  a $k$-brick, which is a concatenation of a $(k-1)$-regular block and  $l$-bricks with $l< k$. We also visualize the changes to get the empty $k$-code from the empty $(k-1)$-code.}
\end{figure}

To mark the period of this $k$-brick we change to  $|$ (or $\times |$ in the case of $1$-bricks) the symbol at the $k$-marker position of the $(k-1)$-regular block or  at the marker position of the $l$-brick by which the $k$-brick begins. We then make $k$-free the positions in the  maximal singular $1$-subblocks  of $u^k$ in the same way as above.

The  $k$-filling positions (resp. $k$-free) of a $k$-regular block or a $k$-brick $u^k$ are the  $\alpha/2^k|u^k|$ first  (resp. remaining) $(k-1)$-free positions of $u_k$. Finally the $(k+1)$-marker position of a regular $k$-block (resp. the marker position of a $k$-brick) is the first $k$-free position in this block (resp. in this brick).   The empty $k$-code $\psi^\emptyset_k$ with respect to $G$ is then just the infinite word  obtained by concatenation of  all empty $k$-blocks.\\

\textit{2. First scale encoding.}
 Empty regular $1$-blocks are encoded as for aperiodic systems. For any $n\leq n_1$ we consider an injection $\psi_n$ from $Per_n(X,T)$ to $\{1,...,K\}^n$. Now we fill up an empty $1$-brick by writing the image by $\psi_n$ of the associated  distinguished periodic point (without erasing the marker position $\times $).\\
 
 \begin{figure}[!h]
 \vspace{-1.8cm}
  \hspace*{-1,4cm}   
\includegraphics[scale=0.202]{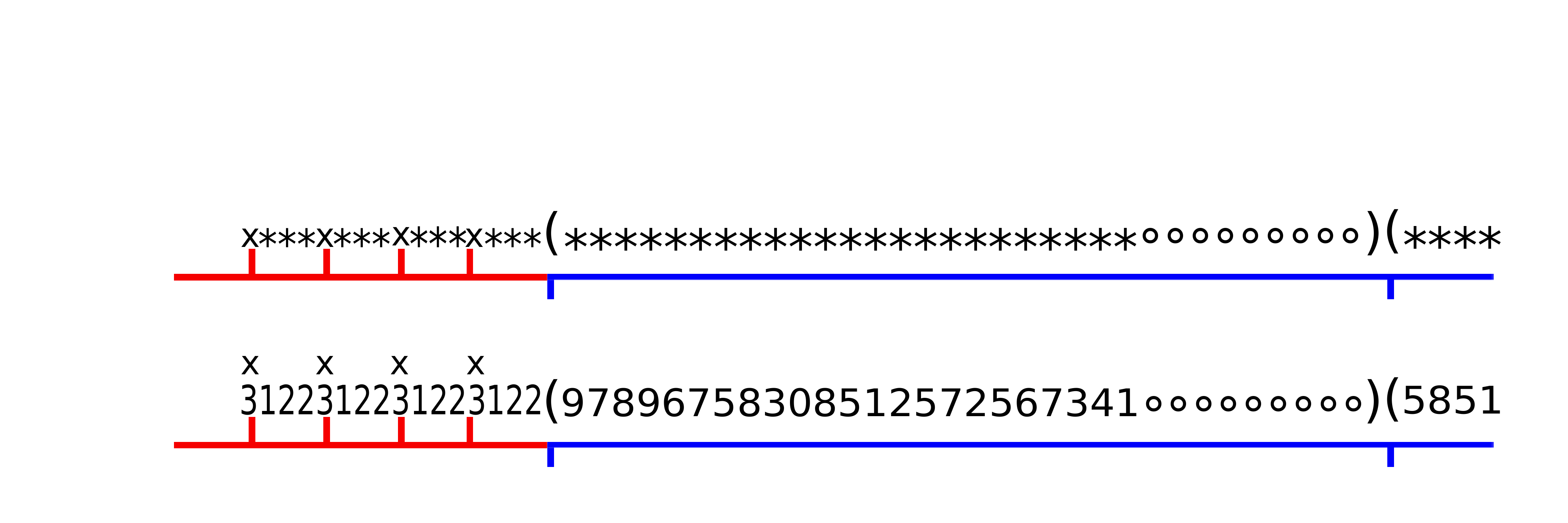}
\caption{\textbf{Filling up empty $1$-blocks ($K=9$).} We fill up empty $1$-blocks. For regular $1$-blocks we proceed as in the aperiodic case. For singular $1$-block we repeat the code of the associated periodic point by preserving the marker $\times$.} 
\end{figure}
 
\textit{3. Higher scale encoding.}
Empty regular $k$-blocks are encoded  by writing in the $k$-filling positions   the code with respect to  $\mathcal{V}_k$ conditionally to $\mathcal{V}_{k-1}$ as in the aperiodic case. We explain now the procedure for an $l$-brick with $l\leq k$. Such a brick is associated to a distinguished periodic point  with least  period $m\in ]n_{l-1},n_l]$. We distinguish then two cases :
either  $l\leq k-1$ and we  let $\psi_k$ be equal to $\psi_{k-1}$ on this brick, or $l=k$ and in the first $\alpha n/2^k$ $k$-filling positions of the brick we write the code of $\mathcal{V}_k^n$ conditionally to $\mathcal{V}_{k-1}^n$ and in the next  $\alpha n/2^k$ $k$-filling positions we specify the periodic point by asymptotic $per$-expansiveness according to  Inequality (\ref{peexp}) on p.\pageref{peexp}.

 The sequence $(\psi_k(x))_k$ is converging pointwisely for the product topology in $\mathcal{A}^\mathbb{Z}$ (with $\mathcal{A}$ being the finite alphabet given by all symbols previously used) as  for any $i$ the $i^{th}$ coordinate is changed at most two times. Indeed,  $k$-free positions are changed at most one time.  For $k\geq 2 $ the  $k$-filling positions are not changed  but for $k=1$ some of them become free and then filled again.  However this  last case may happen at any coordinate at most one time as we leave $1$-bricks forever.  We let $\psi(x)$ be the pointwise limit of $\psi_k(x)$. \\

\textit{4. Decoding.} The  $k$-code $\psi_k(x)$ may be deduced  on $k$-regular blocks from 
$\psi(x)$ by identifying $k$-blocks by induction as follows. Firstly regular $1$-block correspond to finite words in $\psi(x)$ delimited by two consecutive parentheses $($ and $)$. In these blocks we may read the associated element of $\mathcal{V}_1^n$ by looking at the filling $1$-positions. Now in a $1$-singular  block a complete $1$-bricks has not been distorted by higher scale encoding (see Figure \ref{fddf}) and we may read the associated  periodic point between the symbols $\times$.

Then regular $2$-blocks are identified as words between consecutive marked times by  $[$ and $]$ with length in $[n_2/2, 4n_2]$, where these markers occur at the $2$-marker position of a $1$-regular block or at a marker position of a $1$-brick. Note that we may find in a $2$-singular block some consecutive parenthesis $[$ and $]$  coming from the $k$-encoding with $k>2$, however they are at least $n_3/2$-far from each other.

The length of $2$-bricks  may be found thanks to the symbols $|$ at the $2$-marker position of $1$-block or at  a marker position of a $1$-block and we may identify the associated periodic point at the $2$-filling positions.

Similarly one may recover $\psi_k(x)$ and the $k$-blocks from $\psi(x)$  by induction on $k$ and then read either the associated periodic point $\epsilon_k$-close to the piece of orbit for singular $k$-block or the  orbit with respect to $\mathcal{V}_k$ for regular ones. We can assume $2\epsilon_k$ is less than the Lebesgue number of the open cover $T^l\mathcal{V}_k$ for any $k$ and any $|l|\leq n_k$.  In particular this proves that any two points with the same image under $\psi_k$ belong to the same element of $\mathcal{V}_k$. Thus $\psi_k$ is $\epsilon'_k$-injective with $\epsilon'_k$ being the diameter of $\mathcal{V}_k$  and $\psi$ is injective. 

\begin{figure}[!h]
\hspace*{-2,1cm}
\includegraphics[scale=0.127]{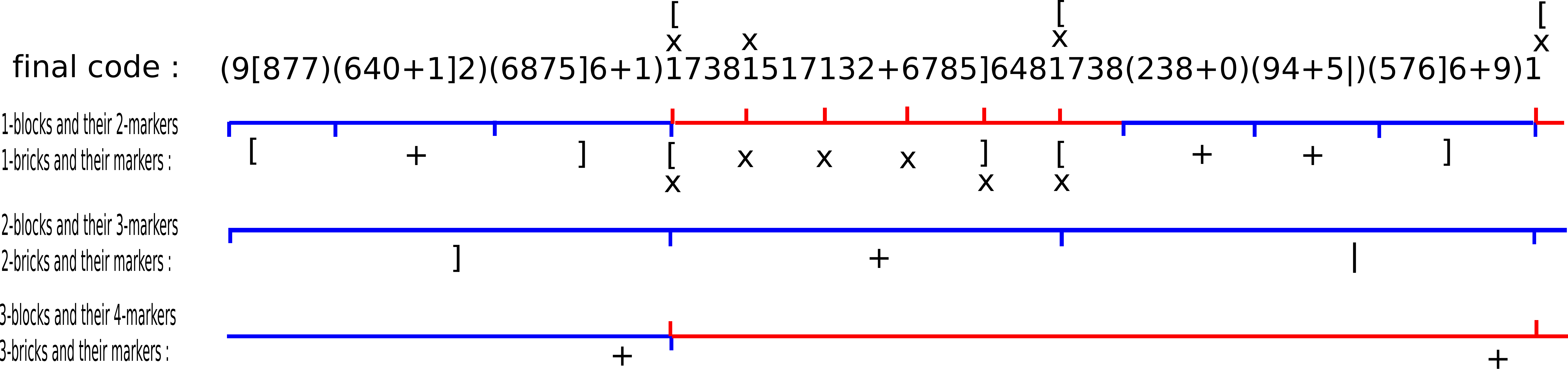}
\caption{\label{dec}\textbf{Example of decoding.} We recover from a part of a  limit code $\psi(x)$ the associated structure of $k$-blocks (resp. bricks) and their $(k+1)-$markers (resp. markers) for $k=1,2,3$.}
\end{figure}

\textit{5. Continuity and convergence  of $(\psi_k)_k$.} One proves as in the aperiodic case the codes $(\psi_k)_k$ are continuous by the clopen property of $(\mathcal{V}_k)_k$, $(U_k)_k$ and $(Per_{n_k}^{\epsilon_k})_k$.
As we change at each step a proportion $\alpha/2^k$ of coordinates  on subblocks of size larger than $n_k/2$ and less than or equal to  $4n_k$ the sequence $(\psi_k)_k$ is converging again uniformly  with respect to $(d_N)_N$.  \\

\textit{6.  Principal strongly faithful symbolic extension as the inverse of Krieger embedding.}
The inverse of $\psi$ is (uniformly) continuous on $\psi(X)$. Indeed if $\psi(x)$ and $\psi(y)$ coincide on their $[-8n_k,8n_k]$  coordinates then  the points $x$ and $y$ belong  either to the same regular  $k$-block or  are $\epsilon_k$-close to the same $n_k$-periodic point. In both cases  $x$ and $y$ belong to the same element of $\mathcal{V}_k$. The other desired properties of the extension are shown as in the aperiodic case.

\section{The general case}\label{red}

Now we will deduce  the general case from the zero-dimensional one by using the small boundary property. More precisely this last property allows us to embed the system in a zero-dimensional one as follows :

\subsection{Embedding systems with the small boundary property in zero-dimensional ones}

\begin{prop}\label{dddd}(p. 73 \cite{Dowstru})
Let $(X,T)$ be an (resp. aperiodic, resp. asymptotically $per$-expansive, resp. asymptotically $h$-expansive)  topological dynamical system with the  small boundary property. Then there exists a zero dimension (resp. aperiodic , resp. asymptotically $per$-expansive,        resp. asymptotically $h$-expansive) system $(Y,S)$ and a Borel  equivariant injective map $\psi:X\rightarrow Y$ such that
\begin{itemize}
\item $\psi$ is continuous on a full set,
\item the induced map $\psi^*:\mathcal{M}(X,T)\rightarrow \mathcal{M}(Y,S)$ is an homeomorphism,
\item $\psi^{-1}$ is uniformly continuous on $\psi(X)$ and  extends to a continuous principal  strongly faithful extension $\pi$ on $Y=\overline{\psi(X)}$  of $(X,T)$.
\end{itemize}
\end{prop}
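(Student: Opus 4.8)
The plan is to construct the zero-dimensional system $(Y,S)$ as an inverse limit of finite essential partitions of $(X,T)$. Using the small boundary property, I would first choose a refining sequence $(\mathcal{P}_k)_k$ of finite essential partitions of $X$ with diameter tending to zero; essentiality means each partition boundary $\partial\mathcal{P}_k$ is a null set, and I may arrange $\mathcal{P}_{k+1}$ to refine $\mathcal{P}_k$. Each $\mathcal{P}_k$ has an associated coding map sending $x$ to its itinerary in $\mathcal{P}_k^{\mathbb{Z}}$ under $T$; taking the product over $k$ of these symbolic codings (with the natural shift $S$) produces a Borel equivariant map $\psi$ into a zero-dimensional shift space $Y$, which I take to be the closure of the image. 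Injectivity of $\psi$ follows because the diameters of the partitions go to zero, so the itinerary determines $x$ uniquely.

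Next I would verify the three listed properties. For the first item, $\psi$ is continuous at every point whose forward and backward orbit avoids all partition boundaries $\bigcup_k \partial\mathcal{P}_k$; since each $\partial\mathcal{P}_k$ is null and a countable union of null sets carrying no positive mass for any invariant ergodic measure is again null, the set of discontinuities is null, so $\psi$ is continuous on a full set. For the homeomorphism statement on invariant measures, essential continuity is the key: because each $\psi^{-1}$ of a basic clopen cylinder has small boundary, $\psi$ is essentially continuous, and as noted in Subsection~\ref{SBP} this forces $\psi^*$ to be continuous; injectivity of $\psi^*$ comes from $\psi$ being a Borel isomorphism onto its image, and surjectivity onto $\mathcal{M}(Y,S)$ is exactly the content of the third item, namely that every invariant measure on $Y$ is supported on $\psi(X)$. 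I would establish this last point by the same empirical-measure argument used in Lemma~\ref{sssup}, showing that a generic point for any ergodic measure on $\overline{\psi(X)}$ is matched by a point of $X$.

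For the uniform continuity of $\psi^{-1}$ and the extension $\pi$, I would observe that agreement of two codes on a long central window forces the underlying points to lie in a common small partition element, hence to be close; this gives uniform continuity of $\psi^{-1}$ on $\psi(X)$, which then extends continuously to the compact closure $Y=\overline{\psi(X)}$. The extension $\pi$ is principal because $\psi$ is a Borel isomorphism preserving entropy of invariant measures (the coding is essentially a generating-partition construction), and strongly faithful because it preserves periodic points and induces the homeomorphism $\psi^*$. Finally, the preservation of aperiodicity, asymptotic $h$-expansiveness, and asymptotic $per$-expansiveness follows since these are invariants of a principal strongly faithful extension, as developed in the earlier subsections. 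The main obstacle I anticipate is the simultaneous control of all three regularity properties of $(Y,S)$ together with essential continuity: one must choose the partitions carefully enough that the refinement preserves the subexponential growth of entropy and periodic points at small scales while keeping every cylinder preimage small-boundaried, and verifying that the inverse-limit construction does not create spurious invariant measures or periodic orbits outside the image.
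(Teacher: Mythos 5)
Your construction is the same as the paper's: code $X$ by a refining sequence of finite essential partitions $(P_k)_k$ with diameters tending to zero, send $x$ to its full itinerary in the shift over $\prod_k P_k$, and take $Y=\overline{\psi(X)}$; injectivity, continuity on a full set, continuity of $\psi^*$ via small boundaries of cylinder preimages, and uniform continuity of $\psi^{-1}$ are all handled as in the paper. However, two of your verification steps have genuine gaps.

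First, to show that every $S$-invariant measure on $Y$ is supported on $\psi(X)$ you invoke the empirical-measure argument of Lemma \ref{sssup}. That lemma does not apply here: its hypotheses require $\psi$ to be a pointwise limit (with respect to $d$) and a uniform limit (with respect to $(d_N)_N$) of \emph{continuous} equivariant maps, whereas the partition-coding map is genuinely discontinuous at points whose orbit meets $\bigcup_k\partial P_k$, and no approximating sequence of continuous equivariant maps is available (the finite-level codings are themselves discontinuous). The paper's argument is more elementary and bypasses Lemma \ref{sssup} entirely: from the explicit formula $\psi^{-1}\left((A_n^k)_{n,k}\right)=\bigcap_{n,k}\overline{T^{-n}A_{n,k}}$ one sees that if $y\in\overline{\psi(X)}$ and the orbit of $x=\pi(y)$ avoids every $\partial P_k$, then $\psi(x)=y$; hence $\overline{\psi(X)}\setminus\psi(X)$ is contained in $\pi^{-1}(B)$, where $B$ is the set of points whose orbit meets $\bigcup_k\partial P_k$. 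Since $B$ is a null set and $\pi^*\nu$ is invariant, any invariant $\nu$ on $Y$ satisfies $\nu\bigl(\overline{\psi(X)}\setminus\psi(X)\bigr)\leq (\pi^*\nu)(B)=0$. You should replace your appeal to Lemma \ref{sssup} by this argument.

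Second, your transfer of asymptotic $per$-expansiveness to $(Y,S)$ cites invariance ``under principal strongly faithful extensions, as developed in the earlier subsections.'' The implication proved in Subsection \ref{dd} goes the opposite way: if the \emph{extension} is asymptotically $per$-expansive then so is the factor. What you need is factor $\Rightarrow$ extension, and the paper explicitly notes (in the subsection introducing $per$-expansiveness) that this notion is not preserved under principal faithful extensions in general, so the step cannot be discharged by citation. The paper's argument uses the coding structure: $n$-periodic points of $Y$ lying in the same cylinder $P_k^n$ have $\pi$-images which are $n$-periodic points of $T$ lying in the same closed cell $\overline{P_k}^n$; since all periodic points of $Y$ lie in $\psi(X)$ (by the first step above) and $\pi$ is injective on $\psi(X)$, the count of $n$-periodic points in any cylinder of $Y$ is bounded by the corresponding count in $X$, which grows subexponentially by $per$-expansiveness of $(X,T)$. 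Either reproduce this argument or prove directly that strong faithfulness forces $\pi$ to be injective on periodic points and run the same counting; as written, this step is asserted, not proved.
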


For completeness we sketch the proof of Proposition \ref{dddd}. We consider a topological dynamical system with the small boundary property.  
Let $(P_k)_k$ be a nonincreasing sequence of essential partitions of $X$ whose diameter goes to zero.   We consider the shift $\sigma$ over $\prod_k P_k^\mathbb{Z}$. Let $\psi:X\rightarrow \prod_k P_k^\mathbb{Z}$ be defined by $\psi(x)=(P_k(T^nx))$, it is injective and  semi-conjugates $T$ with the shift, $\sigma \circ \psi=\psi\circ T$. Clearly $\psi$ is continuous on the full subset of points whose orbit does not intersect the boundary of the $P_k$'s. Moreover  the induced map $\psi^*$ is a topological embedding. Indeed to prove the continuity of $\psi^*$ it is enough to see that $\psi^*\nu(A)$ converges to $\psi^*\mu(A)$ when $\nu$ goes to $\mu$ for any finite cylinder $A$ in $ \prod_k P_k^\mathbb{Z}$. But for such $A$ the closure and the interior of $\psi^{-1}A$ have the same measure for any invariant measure so that the previous property of continuity holds.

 The inverse $\psi^{-1}$ satisfies $\psi^{-1}\left((A_n^k)_{n,k}\right)=\bigcap_{n,k}\overline{T^{-n}A_{n,k}}$ and is thus clearly uniformly continuous. The continuous extension $\pi$ of $\psi^{-1}$ maps $\overline{\psi(X)}\setminus \psi(X)$ on the boundaries of the $(T^nP_k)_{n,k}$'s and thus any  $\sigma$-invariant measure on $\overline{\psi(X)}$ is supported on $\psi(X)$. As  $\psi(Per_n(X,T))=Per_n(\psi(X),S)$ for any integer $n$ we conclude that  $\pi$  is a strongly  faithful principal extension of $(X,T)$. 
 
 Principal extensions preserves the asymptotic $h$-expansiveness property and strongly faithful  extensions preserves the aperiodicity. Finally if  $n$-periodic points in $Y=\overline{\psi(X)}$ lie in the same $P_k^n$ for some $k$ their $\pi$-image  are $n$-periodic points for $T$ in the same $\overline{P_k}^n$. As these periodic points are in fact in $\psi(X)$ and $\pi$ is one-to-one on $\psi(X)$, we conclude asymptotic  $per$-expansiveness is also preserved by this extension.  
 
\subsection{Reduction to the zero-dimensional case}
By applying the above Proposition \ref{dddd}, our Main Theorem follows from Proposition \ref{zer}. Indeed if $(X,T)$ is as in our Main Theorem, then  the zero-dimensional extension $(Y,S) $ of $(X,T)$  obtained in the above proposition satisfies the assumption of Proposition \ref{zer}. The desired embedding is then given by the composition of the embedding in the zero-dimensional system $(Y,S)$ and the Krieger embedding of $(Y,S)$ in the shift with the desired finite alphabet. The property of convergence and continuity of the embedding and its inverse follows then from basic properties of composition.

\section{Proof of Propostion \ref{embb} and \ref{smbb}}

\subsection{Topological embedding in the set of invariant measures  $\Rightarrow$ asymptotic $h$-expansiveness}
Assume $\psi:X\rightarrow \Lambda^\mathbb{Z}$ is a Borel equivariant embedding such that   $\psi^*$ is a topological embedding. Recall that $P_0$ denotes  the zero coordinate partition of $\Lambda^\mathbb{Z}$. Then for any $\mu\in \mathcal{M}(X,T)$ we have  $h(\mu)=h(\psi^*\mu)=h(\psi^*\mu,P_0)=h(\mu,\psi^{-1}P_0)$. In the present paper such a partition $\psi^{-1}(P_0)$ is said  to be generating.  Let $B\in P_0$ and $A=\psi^{-1}B \in \psi^{-1}P_0$ and let $\mu$ be  $T$-invariant measure.  As $\psi^*$ is continuous and $B$ is a clopen set  we have 
\begin{eqnarray*}
\limsup_{\nu\rightarrow \mu}\nu(A)&=&\limsup_{\nu\rightarrow \mu}\psi^*\nu(B),\\
&\leq&\lim_{\xi \rightarrow \psi^*\mu} \xi(B),\\
&\leq &\psi^*\mu(B)=\mu(A).
\end{eqnarray*}
Similarly we prove $\liminf_{\nu\rightarrow \mu}\nu(A)\geq \mu(A)$. Therefore  for any $A\in\psi^{-1}P_0$, the function $\mu\mapsto \mu(A)$ is continuous on $\mathcal{M}(X,T)$. In general it does not imply that $A$ has  a small boundary, for example when there is  an attracting fixed point in the boundary of $B\in \psi^{-1}P_0$ (the Dirac measure at this point is then isolated in $\mathcal{M}(X,T)$).  Finally asymptotic $h$-expansiveness follows from the following lemma. 

\begin{lemma}
Let $(X,T)$ be a topological dynamical system admitting a finite generating partition $P$ such that for any $A\in P$, the function $\mu\mapsto \mu(A)$ is continuous on $\mathcal{M}(X,T)$, then $(X,T)$ is asymptotically $h$-expansive. 
\end{lemma}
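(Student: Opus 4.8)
The plan is to prove that the tail (topological conditional) entropy $h^*(T):=\inf_{\mathcal U}\sup_{\mathcal V}h(\mathcal V\mid\mathcal U)$ vanishes, since $h^*(T)=0$ is a standard characterization of asymptotic $h$-expansiveness. Being generating, $P$ satisfies $h_\mu(T)=h_\mu(T,P)$ for every $\mu\in\mathcal M(X,T)$, so the whole entropy is already carried by the \emph{fixed finite} partition $P$. First I would record the resulting cap: for any finite open cover $\mathcal U$ one has $h_\mu(T,\mathcal U\vee P)\ge h_\mu(T,P)=h_\mu(T)$ and hence $h_\mu(T,\mathcal U\vee P)=h_\mu(T)$. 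Measuring the entropy of covers in the sense of Romagnoli, the functions $h_k(\mu):=h_\mu(T,\mathcal U_k)$ attached to finite open covers $\mathcal U_k$ with $\operatorname{diam}\mathcal U_k\to 0$ are upper semicontinuous and increase pointwise to $h_\mu(T)$; they constitute an entropy structure, and asymptotic $h$-expansiveness amounts exactly to $h_k\to h$ \emph{uniformly} on $\mathcal M(X,T)$.

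The core is then a uniform estimate of the defect $h_\mu(T)-h_k(\mu)=h_\mu(T,\mathcal U_k\vee P)-h_\mu(T,\mathcal U_k)$. By subadditivity of conditional entropy this defect is bounded by the dynamical conditional entropy of $P$ given $\mathcal U_k$, which in turn is governed by the $\mu$-mass that the iterates of the fine cover $\mathcal U_k$ still fail to separate, essentially the mass $\mu$ places near the boundary $\partial P=\bigcup_{A\in P}\partial A$. This is exactly where the hypothesis is used: the continuity of $\mu\mapsto\mu(A)$ for each $A\in P$ is meant to force this near-boundary mass to be uniformly small for $k$ large, so that $\sup_\mu (h_\mu(T)-h_k(\mu))\to 0$.

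I expect the delicate point to be this uniformity, rather than the entropy inequalities, because continuity of $\mu\mapsto\mu(A)$ does \emph{not} by itself force $\mu(\partial A)=0$: at an isolated Dirac mass on an attracting fixed point of $\partial A$ (as noted before the statement) the boundary is fully charged while continuity holds vacuously, and even a non-isolated measure may keep positive boundary mass carried by a zero-entropy part. My plan is therefore to pass to the ergodic decomposition and argue that the defect is insensitive to such boundary mass: a component charging $\partial P$ but of zero entropy contributes nothing to $h_\mu(T)-h_k(\mu)$, while the continuity of $\mu\mapsto\mu(A)$ should control, uniformly over $\mathcal M(X,T)$, the mass of the entropy-carrying part lying near $\partial P$. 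Establishing this uniform control is the crux; granting it, $\sup_\mu (h_\mu(T)-h_k(\mu))\to 0$, that is $h^*(T)=0$, and $(X,T)$ is asymptotically $h$-expansive.
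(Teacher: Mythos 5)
Your proposal sets up the correct framework (an entropy structure $(h_k)_k$, the characterization of asymptotic $h$-expansiveness as uniform convergence $h_k\to h$, and the use of the generating property of $P$ to bound the defect $h-h_k$ by a conditional entropy), but it stops precisely where the lemma has content: you write that "establishing this uniform control is the crux; granting it" the result follows. What you propose to grant is the whole statement. Moreover, the mechanism you envisage for that crux --- uniformly controlling, via the ergodic decomposition, the mass that the "entropy-carrying part" of $\mu$ places near $\partial P$ --- is a dead end: continuity of $\mu\mapsto\mu(A)$ gives no quantitative bound on the measure of any neighborhood of $\partial A$ (your own remark about the attracting fixed point shows $\partial A$ may carry full mass), and the ergodic decomposition is not weak-$*$ continuous, so there is no way to split off a "zero-entropy part charging $\partial P$" in a manner that is uniform over $\mathcal{M}(X,T)$. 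A second structural problem is that your tail functions $h-h_k$, with $h_k(\mu)=h_\mu(T,\mathcal{U}_k)$ and $h_k$ upper semicontinuous, are differences of a non-continuous function and an u.s.c.\ one, so no compactness argument can be applied to them directly.

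The paper's proof gets the uniformity by a soft argument in which no boundary mass is ever estimated, and this is exactly the idea missing from your plan. It takes the Lebesgue entropy structure $h_k(\mu)=h(\mu\times\lambda,P_k)$, where $(P_k)_k$ is a nonincreasing sequence of \emph{essential} partitions of the product $(X\times\mathbb{S}^1,T\times R)$ with $R$ an irrational rotation (the product always has the small boundary property even when $X$ does not --- this is why the auxiliary rotation appears, and your structure has no substitute for it). Since $P$ is generating, $h(\mu)-h_k(\mu)\leq \theta_k(\mu):=h(\mu\times\lambda, P\times\mathbb{S}^1\vee P_k\,|\,P_k)$. The hypothesis that $\mu\mapsto\mu(A)$ is continuous for $A\in P$ is used \emph{only} to make each $\theta_k$ upper semicontinuous on $\mathcal{M}(X,T)$; the $\theta_k$ are nonincreasing in $k$ and converge pointwise to zero because $\mathrm{diam}(P_k)\to 0$. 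Then the Dini-type theorem for u.s.c.\ functions (a nonincreasing sequence of nonnegative u.s.c.\ functions on a compact space converging pointwise to $0$ converges uniformly) gives $\sup_\mu \theta_k(\mu)\to 0$, hence $h_k\to h$ uniformly and $T$ is asymptotically $h$-expansive by the tail variational principle. In short, the uniformity you identified as the crux comes for free from compactness, monotonicity and semicontinuity; the hypothesis must be converted into upper semicontinuity of u.s.c.\ dominating tail functions, not into mass estimates near $\partial P$.
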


\begin{proof}
Let $(h_k)_k$ be the Lebegue  entropy structure (see Section 6.3 in \cite{Dowstru}). For any integer $k$ the function $h_k$ is defined as the entropy of $\mu\times \lambda$ with respect to  an essential partition $P_k$ of $(X\times \mathbb{S}^1,T\times R)$, where $R$ is an irrational circle rotation and $(P_k)_k$ is a nonincreasing sequence of essential partitions of $X\times \mathbb{S}^1$ whose diameters go to zero. By the tail variational principle (Theorem 5.1.4 in \cite{Dowstru}) $T$ is asymptotically $h$-expansive if 
and only if $(h_k)_k$ is converging uniformly to $h$. By assumption on $P$, the maps $\mu\mapsto  h(\mu\times \lambda , P \times \mathbb{S}^1 \vee P_k | P_k)$ are upper semicontinuous on $\mathcal{M}(X,T)$. Moreover they are   nonincreasing in $k$ and  converging to zero. It is well known that the convergence is in fact uniform for such sequences of functions. As  $h(\mu)-h_k(\mu)\leq  h(\mu\times \lambda , P \times \mathbb{S}^1 \vee P_k | P_k)$ for any $T$-invariant measure $\mu$ the sequence $h-h_k$  is  also converging uniformly to zero and thus $T$ is asymptotically $h$- expansive.
\end{proof}

\subsection{Topological embedding in the set of invariant measures  $\Rightarrow$ asymptotic $per$-expansiveness}
Let $\psi:(X,T)\rightarrow (\{1,...,K\}^\mathbb{Z},\sigma)$ be a Borel equivariant embedding such that   $\psi^*$ is a topological embedding and let $(\mathcal{U}_k)_k$ be a  sequence of open covers such that the diameter of $\mathcal{U}_k$ goes to zero.
We will show that the exponential growth in $n$ of the set $U_k^n\cap Per_{n}(X,T)$ goes to zero uniformly in $U_k^n\in \mathcal{U}_k^n$ when $k$ goes to infinity. Assume first $(X,T)$ has the small boundary property. Let $(P_l)_l$ be a sequence of essential partitions with $diam(P_l)\xrightarrow{l\rightarrow 0}0$. Let $\epsilon>0$. For any $l$ there exist 
$k_l$ and $n_l$ such that any $U_{k}^n$ with $k>k_l$ and $n>n_l$ meets at most $ e^{\epsilon n}$ elements of $P_{l}^n$ (see Lemma 6 in \cite{burg}). Thus it is enough to prove that the exponential growth in $n$ of the set $A^n\cap Per_{n}(X,T)$ goes to zero uniformly in $A^n\in P_l^n$ when $l$ goes to infinity. Take $A^n$ so that the previous intersection has maximal cardinality. We consider 
 the associated empirical measures $\mu_n:=\frac{1}{\sharp A^n\cap Per_{n}(X,T) } \sum_{x\in A^n\cap Per_{n}(X,T)}\delta_x$ and  $\nu_n:=\frac{1}{n}\sum_{k=0}^{n-1}T^*\mu_n$ for any integer $n$. Let $P=\psi^{-1}(P_0)$. Following \cite{burg} p.368 l.6,  we get for any $m<n$ 
\begin{eqnarray*} 
 \frac{H _{\nu_n}(P^m|P_l^m)}{m}&=&\frac{H _{\psi^*\nu_n}(P_0^m | \psi(P_l)^m)}{m};\\
 &\geq & \frac{H _{\psi^*\mu_n}(P_0^n | \psi(P_l)^n)-3m\log K}{n};\\
& \geq &\frac{\log \sharp A^n\cap Per_{n}(X,T)-3m\log K}{n}.
\end{eqnarray*}
The last inequality follows from the injectivity of $\psi$, the inclusion  $\psi(Per_n(X,T))\subset Per_n(\Lambda^\mathbb{Z},S)$ and  the inequalities  $\sharp Per_n(\Lambda^\mathbb{Z},S)\cap B^n\leq 1$ for any given $B^n\in P_0^n$.  Observe also that $\nu_n$ is invariant: it is the barycenter of the periodic measures associated to the periodic points in  $A^n\cap Per_{n}(X,T)$.  Let $\nu$ be a weak limit of $(\nu_n)_n$. 
As seen above in Subsection 6.1 the sequence $(\nu_n(A))_n$ converges to $\nu(A)$ for any $A\in P$.
By taking the limit in $n$ and then $m$ we get thus by upper semicontinuity:
$$h(\nu, P|P_l)\geq \limsup_n \frac{\log \sharp A^n\cap Per_{n}(X,T)}{n}.$$
However by asymptotic $h$-expansiveness, which follows from Subsection 6.1, the left member goes to zero uniformly in $\nu$ when $l$ goes to infinity. This concludes the proof when $(X,T)$ has the small boundary property. For general systems one may consider the product with an irrational circle rotation and apply the previous proof by replacing $\mu_n$ with is product with a Dirac measure of the circle, $\psi$ by its product with the circle identity and $P$ by its product with the circle.

\subsection{Principal strongly faithful asymptotic  $per$-expansive extension  $\Rightarrow$ asymptotic $per$-expansiveness}\label{dd}We prove now the second item of Proposition \ref{embb}. We only have to prove asymptotic $per$-expansiveness as principal extensions preserve asymptotical $h$-expansiveness as proved by Ledrappier \cite{Led}.
Let $\pi:(Y,S)\rightarrow (X,T)$ be a principal  extension. Equivalently the topological conditional entropy  $h(S|T)$ of $S$ w.r.t. $T$ vanishes (see Section 6.3 of \cite{Dowb} for precise definitions and basic properties). In particular, for any $\epsilon>0$ there exists an open cover $\mathcal{U}$ of $X$ such that for any open cover $\mathcal{V}$ and for any $U^n\in \mathcal{U}^n$ with large $n$, the set $\pi^{-1}U^n$ may be covered by $e^{\epsilon n/2}$ element of  $\mathcal{V}^n$. If we assume moreover the extension to be strongly faithful it preserves periodic orbits. In particular the number of $n$-periodic points for $T$ in $U^n$
is equal to the number of $n$-periodic points for $S$ in $\pi^{-1}U^n$. Therefore if $(Y,S)$ is itself asymptotically $per$-expansive, one may choose $\mathcal{V}$ so that  the number $n$-periodic points for $S$ in any $V^n$ is less than $e^{\epsilon n/2}$. One concludes that there at most $e^{\epsilon n}$ $n$-periodic points of $(X,T)$ in  $U^n$. This proves $(X,T)$ is asymptotically $per$-expansive.

\subsection{$\epsilon$-injective essentially continuous map $\Rightarrow$ small boundary property}

In Krieger Theorem for topological dynamical systems $(X,T)$ recalled in Theorem \ref{kkkk}, the existence of a topological embedding from $X$ to a shift space with finite alphabet  forces the space $X$ to be zero-dimensional. Analogously we have now : 

\begin{prop}\label{smbb} Let $(X,T)$ be a topological dynamical system and let $Y$ be a zero-dimensional topological space. Assume that for any $\epsilon>0$ there exists an essentially continuous $\epsilon$-injective map from $X$ to $Y$. Then $(X,T)$ has the small boundary property. \end{prop}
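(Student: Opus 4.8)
The plan is to turn the hypothesis, for each fixed $\epsilon>0$, into a single finite essential partition of $X$ of mesh less than $\epsilon$; producing such partitions for all $\epsilon$ is exactly the small boundary property. So I fix $\epsilon>0$ and let $\psi\colon X\to Y$ be an essentially continuous $\epsilon$-injective map provided by the assumption. From $\epsilon$-injectivity I extract a $\delta>0$ such that $\psi^{-1}(Z)$ has diameter less than $\epsilon$ whenever $\mathrm{diam}(Z)<\delta$, and from essential continuity I extract a basis $\mathcal B$ of clopen subsets of $Y$ for which every $\psi^{-1}(B)$, $B\in\mathcal B$, has a small boundary.

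The first step is to build, out of $\mathcal B$, a finite clopen partition of $Y$ with mesh less than $\delta$. Since $\mathcal B$ is a basis of clopen sets, each point of the (compact, zero-dimensional) space $Y$ lies in some member of $\mathcal B$ of diameter less than $\delta$; by compactness finitely many of them, say $B_1,\dots,B_n$, cover $Y$. Disjointifying in the usual way, I set $C_1=B_1$ and $C_j=B_j\setminus(B_1\cup\cdots\cup B_{j-1})$ for $j\ge 2$. Each $C_j$ is clopen (a finite Boolean combination of clopen sets), the $C_j$ are pairwise disjoint and cover $Y$, and $C_j\subseteq B_j$ forces $\mathrm{diam}(C_j)<\delta$. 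The point to retain is that each $C_j$ is a finite Boolean combination of members of $\mathcal B$.

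Now I pull back: I set $P_j=\psi^{-1}(C_j)$, which gives a finite Borel partition of $X$. Because $\psi^{-1}$ commutes with unions, intersections and complements, each $P_j$ is the corresponding Boolean combination of the sets $\psi^{-1}(B_i)$; since the boundary of a finite Boolean combination is contained in the union of the boundaries of the pieces, $\partial P_j\subseteq\bigcup_i\partial\psi^{-1}(B_i)$, a finite union of null sets, hence null. Thus every $P_j$ has a small boundary and $\{P_1,\dots,P_n\}$ is essential. Finally $\mathrm{diam}(C_j)<\delta$ together with $\epsilon$-injectivity gives $\mathrm{diam}(P_j)=\mathrm{diam}(\psi^{-1}(C_j))<\epsilon$, so the partition has mesh less than $\epsilon$. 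As $\epsilon>0$ was arbitrary, $X$ admits finite essential partitions of arbitrarily small diameter, which is the small boundary property.

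The only slightly delicate point is the second step: I must convert a mere cover by basis elements into an honest finite partition without destroying the small-boundary property of the preimages. This is handled by the disjointification, which stays inside the Boolean algebra generated by $\mathcal B$, combined with the two stability facts used above, namely that boundaries behave subadditively under finite Boolean operations and that a finite union of null sets is null (each $T$-invariant ergodic measure gives it measure zero). I also rely on $Y$ being compact to extract the finite subcover, which is consistent with the notion of essential continuity as it is defined only for compact zero-dimensional $Y$.
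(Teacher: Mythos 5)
Your proof is correct and follows essentially the same route as the paper: pull back a fine clopen partition of $Y$ built from the basis $\mathcal{B}$, and use that boundaries are subadditive under finite Boolean operations and that finite unions of null sets are null. If anything, you are slightly more careful than the paper at the disjointification step (the paper phrases the partition elements as ``finite intersections'' of basis elements, whereas set differences are genuinely needed, as you handle explicitly).
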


\begin{proof}
Let $(X,T)$ be a topological dynamical system and let $Y$ be a zero-dimensional space.
Assume that for any $\epsilon>0$ there exists an $\epsilon$-injective essentially continuous map $\psi$. Let $\delta>0$ be such that for 
any set $Y$ with  diameter less than $\delta$ the set $\psi^{-1}Y$ has diameter less than $\epsilon$. We let $\mathcal{B}$ be a basis of clopen sets such that 
$\psi^{-1}B$ has a  small boundary for any $B\in \mathcal{B}$.  Consider then a finite clopen partition $P$ of $Y$ with diameter less than $\delta$ such that any element of $P$ is a finite intersection of elements of $\mathcal{B}$. In particular, for any $A\in P$, the set $\psi^{-1}A$ is a finite intersection of small boundary sets and thus it has itself small boundary. As the diameter of $\psi^{-1}A$ is less than $\epsilon$, it concludes the proof of Proposition \ref{smbb}.
\end{proof}

\appendix
\section{Appendix : Equality in Krieger topological embedding problem}

\begin{prop}\nonumber
Let $(X,T)$ be a topological dynamical system with $h_{top}(T)=\log K$. Assume also there is a topological equivariant embedding $\psi:X\rightarrow \{1,...,K\}^\mathbb{Z}$. Then $\psi$ is a topological conjugacy. 
\end{prop}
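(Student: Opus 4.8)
The plan is to prove that $\psi$ is surjective. Since $X$ is compact and $\psi$ is a continuous injection into a Hausdorff space, $\psi$ is automatically a homeomorphism onto its image, so once surjectivity is established the embedding is upgraded to a topological conjugacy.

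First I would observe that $Y:=\psi(X)$ is a subshift of $\{1,...,K\}^\mathbb{Z}$. It is compact, being the continuous image of a compact set, hence closed; and it is $\sigma$-invariant, since $\sigma(\psi(X))=\psi(TX)=\psi(X)$ by equivariance and the invertibility of $T$. As $\psi:(X,T)\to (Y,\sigma)$ is then a topological conjugacy and topological entropy is a conjugacy invariant, we get $h_{top}(Y,\sigma)=h_{top}(T)=\log K$.

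Next I would translate the entropy hypothesis into a combinatorial statement about admissible words. Let $L_n$ denote the number of words of length $n$ occurring in points of $Y$. Any such word of length $m+n$ splits into a prefix of length $m$ and a suffix of length $n$, both occurring in $Y$, so $L_{m+n}\le L_m L_n$; by Fekete's lemma the entropy of the subshift equals $\inf_n \tfrac{1}{n}\log L_n$. Trivially $L_n\le K^n$, hence $\tfrac{1}{n}\log L_n\le \log K$ for every $n$. The equality $\inf_n \tfrac{1}{n}\log L_n=\log K$ then forces $\tfrac{1}{n}\log L_n=\log K$, i.e. $L_n=K^n$, for every $n\ge 1$. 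This is the one genuinely load-bearing step, and it hinges on the fact that for a subshift the entropy is the \emph{infimum} of the normalized word counts rather than merely their limit: this is exactly what yields the precise identity $L_n=K^n$ at every finite level, and not just asymptotically.

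Finally, $L_n=K^n$ for all $n$ means every word over $\{1,...,K\}$ is admissible in $Y$. Consequently every cylinder of $\{1,...,K\}^\mathbb{Z}$ meets $Y$: given a prescribed word, shift one of its occurrences into position using the invariance of $Y$. Thus $Y$ is dense, and being closed it equals $\{1,...,K\}^\mathbb{Z}$. Therefore $\psi$ is surjective, and together with the opening remark it is a topological conjugacy.
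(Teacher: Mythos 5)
Your proof is correct and is in substance the paper's own argument: both rest on the fact that entropy is the \emph{infimum} (not merely the limit) of the normalized word counts, so $h_{top}(T)=\log K$ forces all $K^n$ words of every length $n$ to occur in $\psi(X)$, after which surjectivity follows from closedness of the image and a density (compactness) argument. The only difference is where the computation is carried out: the paper stays on $X$ with the pullback partition $P=\psi^{-1}P_0$ and justifies $h_{top}(T)=h_{top}(T,P)=\inf_n\frac{1}{n}\log\sharp P^n$ by a separated-sets argument, whereas you transport everything to the image subshift $Y=\psi(X)$ using the compact-to-Hausdorff homeomorphism fact together with conjugacy invariance of entropy and Fekete's lemma --- a purely cosmetic repackaging of the same idea.
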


\begin{proof}
It is enough to prove that $\psi$ is onto. Let $P$ be the clopen cover of $X$ given by $P:=\psi^{-1}P_0$. Then for $\delta>0$ and $n\in \mathbb{N}$, any  $(\delta,n)$ separated set $E$ in $A^n\in P^n$ has cardinality bounded by a constant. Indeed there  is $\delta'>0$ depending only on $\delta$ and $\psi$ such that $\psi(E)$ is $(\delta',n)$ separated in $\psi(A^n)\in P_0^n$. In particular $h_{top}(T)=h_{top}(T, P):=\inf_n\frac{\log\sharp P^n}{n}$. But as $\sharp P=K$ and $h_{top}(T)=\log K$ we have $\sharp P^n=K^n$ for all $n\in \mathbb{N}$. Fix $y\in \{1,...,K\}^\mathbb{Z}$ and let us show there exists $x\in X $ with $\psi(x)=y$. Let $n\in \mathbb{N}$.
As $\sharp P^{2n+1}=K^{2n+1}$ there is $x_n\in X$ such that the $k^{th}$ coordinates of $\psi(x_n)$ with $|k|\leq n $ coincide with those of $y$. Then if $x$ is an accumulation point of $(x_n)_n$  we get $\psi(x)=y$ by continuity of $\psi$. 
\end{proof}


\begin{thebibliography}{9}
\bibitem{Ber}, P. Berger, {\it Properties of the maximal entropy measure and geometry of Hénon attractors},  arXiv:1202.2822, 2013.
\bibitem[BFP99]{Bl}F. Blanchard, E. Formenti and P. Kurka, {\it Cellular automata in Cantor, Besicovitch and Weyl topological spaces}, Complex Systems 11(2) p. 107-123, 1999.
\bibitem[BD04]{BD}M. Boyle and T. Downarowicz, {\it The entropy theory of symbolic extensions}, Invent. Math. 156, no. 1, p. 119–161, 2004.
\bibitem[BFF02]{BFF} M. Boyle, D. Fiebig, U. Fiebig, {\it Residual entropy, conditional entropy and subshift covers}, Forum Math. 14, no. 5, p. 713–757, 2002.
\bibitem[Bur16]{Bur16}D. Burguet, {\it Periodic expansiveness of $C^\infty$ surface diffeomorphisms}, preprint.
\bibitem[Bur09]{burg}D. Burguet, {\it A direct proof of the tail variational principle and its extension to maps}, 
Erg. Theory and Dyn.Systems (29),  p 357-369, 2009.
\bibitem[Buz14]{Buzz} J. Buzzi,  {\it The almost Borel strucure of diffeomorphisms with some hyperbolicity}, http://arxiv.org/pdf/1403.2616.pdf, 2014.
\bibitem[DH12]{DHH}  T. Downarowicz and D. Huczek, {\it  
Faithful zero-dimensional principal extensions}, 
Studia Math. 212, p. 1-19, 2012.
\bibitem[Dow11]{Dowb}T. Downarowicz, {\it Entropy in dynamical systems}, New Mathematical Monographs, 18. Cambridge University Press, Cambridge, 2011. 
\bibitem[Dow01a]{Down} T. Downarowicz,  {\it Entropy of a symbolic extension of a dynamical system},  Ergodic Theory Dynam. Systems 21 , no. 4, p. 1051–1070, 2001.
\bibitem[Dow05]{Dowstru} T. Downarowicz, {\it Entropy structure},
J. Anal. Math., 96, p.  57-116, 2005.
\bibitem[Dow06]{dowi} T.Downarowicz, {\it Minimal models for noninvertible and not uniquely ergodic systems}, Israel Journal of Math.  156 , 93-110, 2006.
\bibitem[Kri70]{Kr} W. Krieger, {\it On entropy and generators of measure-preserving transformations}, Trans.
Amer. Math. Soc. 149, p. 453–464, 1970.
\bibitem[Kri82]{Kri} W. Krieger, 
{\it On the subsystems of topological Markov chains}, 
Ergodic Theory Dynam. Systems 2, p. 195–202, 1982. 
\bibitem[Kuz95]{Kuz} J. Kulesza, {\it Zero-dimensional covers of finite dimensional dynamical systems},
Ergodic Theory Dynam. Systems 1, p. 939-950, 1995.

\bibitem[Gut15]{Gut} Y. Gutman, {\it Dynamical Embedding in Cubical Shifts and the Topological Rokhlin and Small Boundary Properties}, Ergodic Theory Dynam. Systems, Volume 37, Issue p. 512-538,  2017.
\bibitem[GT15]{Lin} Y. Gutman and M. Tsukamoto,  {\it Embedding minimal dynamical systems into Hilbert cubes}, arxiv preprint, arxiv:1511.01802 (2015).
\bibitem[Had]{Had} J. Hadamard, Les surfaces `a courbures oppos´ees et leurs lignes g´eod´esiques, J. Math. Pures
Appl. 4 (1898), p. 27–74. OEuvres, tome II, p. 729–775.

\bibitem[Hoch13]{Hoch} M. Hochman,  {\it Isomorphism and embedding of Borel systems on full sets}, Acta Appl. Math. 126, p. 187–201,  2013.
\bibitem[Led78]{Led}F. Ledrappier, {\it A variational principle for the topological conditional entropy}, Ergodic theory (Proc. Conf., Math. Forschungsinst., Oberwolfach, 1978), p. 78–88, Lecture Notes in Math., 729, Springer, Berlin, 1979.
\bibitem[Lin95]{Lin1} E. Lindenstrauss, {\it Lowering Topological Entropy}, J. D'Analyse Math. 67,  p. 231-267, 1995. 
\bibitem[Lin99]{Lin99} E. Lindenstrauss, {\it Mean Dimension, Small Entropy Factors, and an Embedding Theorem}, Publ. Math. IHES no. 89,  p. 227-262, 1999.
\bibitem[Mis80]{Mis}
M. Misiurewicz and W. Szlenk, { \it Entropy of piecewise monotone mappings}, Studia Math. 67, no. 1, p. 45–63, 1980. 
\bibitem[New05]{New}
T. Downarowicz and S. Newhouse, {\it Symbolic extensions and smooth dynamical systems}, 
Invent. Math. 160, no. 3, 453-49, 2005. 
\bibitem[Ser12]{Ser} J. Serafin, {\it A faithful symbolic extension}, Commun. Pure Appl. Anal. 11, no. 3, p.1051-1062, 2012.
\bibitem[Sh96]{Sh} P. Shields, {\it The ergodic theory of discrete sample paths}, AMS Graduate Studies in Mathematics 13 (1996).
\bibitem[Tak13]{Tak},
H. Takahasi, {\it Prevalence of non-uniform hyperbolicity at the first bifurcation of h ́enon-like families}, arXiv:1308.4199, 2013.
\bibitem[Yom87]{Yom}
Y.Yomdin, {\it Volume growth and entropy}, Israel J.Math. 57, p. 285-300, 1987.
\end{thebibliography}
\end{document}